\newtheorem{lemma}{Lemma}[section]
\newtheorem{theorem}{Theorem}[section]
\newtheorem{definition}[theorem]{Definition}
\newtheorem{remark}[theorem]{Remark}
\newcommand{\norm}[1]{\left\lVert#1\right\rVert}
\newcommand{\abs}[1]{\left\vert#1\right\vert}
\DeclareMathOperator{\sgn}{sgn}
\title{Mild solutions to the dynamic programming equation for stochastic optimal control problems} %
\author{Viorel Barbu\thanks{A.I. Cuza University, Iasi, Romania} \and Chiara Benazzoli\thanks{Dept. of Mathematics, University of Trento, Italy} \and 
Luca Di Persio\thanks{Dept. of Computer Science, University of Verona, Italy}}
\begin{document}

\maketitle

\textbf{Keyword:}                           
stochastic process; optimal control; \\$m$-accretive operator; Cauchy problem.

\begin{abstract}                          
We show via the nonlinear semigroup theory in $L^1(\mathbb{R})$ that the $1$-D dynamic programming equation associated with a stochastic optimal control problem with multiplicative noise has a unique mild solution $\varphi\in C([0,T];W^{1,\infty}(\mathbb{R}))$ with $\varphi_{xx}\in C([0,T];L^1(\mathbb{R}))$.  The $n$-dimensional case is also investigated. 
\end{abstract}


\section{Introduction}
Consider the following stochastic optimal control  problem
\begin{equation}\label{min:pb}
\text{\underline{Minimize}}\quad\mathbb{E}\biggl\{\int_0^T\Bigl(g\bigl(X(t)\bigr)+h\bigl(u(t)\bigl)\Bigr)\,dt+g_0\bigl(X(T)\bigr)\biggr\},
\end{equation}
subject to $u\in\mathcal{U}$ and to state equation
\begin{equation}\label{SDE}
\begin{cases}dX=f(X)\,dt+\sqrt{u}\,\sigma(X)\,dW,\quad\text{for }t\in(0,T)\\
X(0)=X_0\end{cases}
\end{equation}
where  $\mathcal{U}$ is the set of all $\{\mathcal{F}_t\}_{t\ge 0}$-adapted processes $u:(0,T)\rightarrow\mathbb{R}^+=[0,+\infty]$ and $W:\mathbb{R}\rightarrow\mathbb{R}$ is an $1$-D Wiener process in a probability space $(\Omega,\mathcal{F},\mathbb{P})$, provided  the natural filtration $\{\mathcal{F}_t\}_{t\ge 0}$. Here $X_0\in\mathbb{R}$, while  $X:[0,T]\rightarrow\mathbb{R}$ is the strong solution to \eqref{SDE}. 

We would like to underline that the studied optimization problem is related to the so called stochastic volatility models, used in the financial framework, whose relevance has raised exponentially during last years. In fact such models, contrarily to the constant volatility ones as, e.g., the standard Black and Scholes approach, the Vasicek interest rate model, or the Cox-Ross-Rubistein model, allow to consider the more realistic situation of volatility levels changing in time. As an example, the latter is the case of the Heston model, see \cite{Heston}, where the variance is assumed to be a stochastic process following a Cox-Ingersoll-Ross (CIR) dynamic, see \cite{CIR} or \cite{CDP} and references therein for more recent related techniques, as well as the case of the Constant  Elasticity of Variance (CEV) model, see \cite{CEV}, where the volatility is expressed by a power of the underlying level, which is often referred as a local stochastic volatility model. Other interesting examples, which is  the object of our ongoing research particularly from the numerical point of view, include the Stochastic Alpha, Beta, Rho (SABR) model, see, e.g., \cite{Hagan}, and models which are used to estimate the stochastic volatility by exploiting directly  markets data, as  happens using the GARCH approach and its variants.  

Within latter frameworks and due to several macroeconomic crises that have affected different (type of) financial markets worldwide, governments decided to become {\it active players of  the game}, as, e.g., in the recent case of the {\it Volatility Control Mechanism} (VCM) established   for  the securities, resp. for the derivatives, market established in August 2016, resp. in January 2017, within the Hong Kong Stock Exchange (HKEX) framework, see, e.g., \cite{Stein1,Stein2} and references therein for other applications and examples.

\noindent
\textbf{Hypotheses}:
\begin{enumerate}
\item $h:\mathbb{R}\rightarrow\mathbb{R}$ is convex, continuous and $h(u)\ge\alpha_1\,|u|^2+\alpha_2$, $\forall u\in\mathbb{R}$, for some $\alpha_1>0, \alpha_2\geq 0$.
\item $f\in C_b^2(\mathbb{R})$, $f''\in L^1(\mathbb{R})$, $g,g_0\in W^{2,\infty}(\mathbb{R})$.
\item $\sigma\in C_b^1(\mathbb{R})$, and
\begin{equation}\label{eq:4}
|\sigma(x)|\ge\rho>0,\quad\forall x\in\mathbb{R}.
\end{equation}
\end{enumerate}
We set
\[
H(u)=h(u)+I_{[0,\infty)}(u)=\begin{cases}h(u)\quad\text{if }u\ge0\\+\infty\quad\text{otherwise}\end{cases}
\]
and we denote by $H^*$ the Legendre conjugate of $H$, namely,
\begin{equation}\label{eq:5}
H^*(p)=\sup\{p\,u-H(u)\,:\;u\in\mathbb{R}\},\quad\forall p\in\mathbb{R}.
\end{equation}

We have $(H^*)'(p) = ( \partial h  + N_{[0,\infty)})^{-1}p \in Lip(\mathbb{R})$, where $\delta h$ is the subdiffential of $h$, and $N_{[0,\infty)}$ is the normal cone to $[0,\infty)$. This yields 
\begin{equation}\label{eq:5prime}
 (H^*)'' \in L^\infty(\mathbb{R}) \;,\; 0 \leq (H^*)' (p) \leq C ( \abs{ p} +1) \;,\; \forall p \in \mathbb{R}\;.
\end{equation}

We denote also by $j$ the potential of $H^*$, that is
\[
j(r)=\int_0^rH^*(p)\,dp,\quad\forall r\in\mathbb{R}.
\]

The dynamic programming equation corresponding to the stochastic optimal control problem \eqref{min:pb} is given by 
(see, e.g., \cite{fleming},\cite{oksendal}),
\begin{equation}\label{DPE}
\begin{cases}\varphi_t(t,x)+\min_{u}\bigl\{\frac{1}{2}\sigma^2\,\varphi_{xx}(t,x)\,u+H(u)\bigr\}\\
\hspace{1cm}+f(x)\,\varphi_x(t,x)+g(x)=0,\quad\forall t\in[0,T],x\in\mathbb{R}\\
\varphi(T,x)=g_0(x),\quad x\in\mathbb{R},\end{cases}
\end{equation}
or equivalently
\begin{equation}
\begin{cases}\label{DPE:conj}
\varphi_t(t,x)-H^*\bigl(-\frac{1}{2}\sigma^2\,\varphi_{xx}(t,x)\bigr)+f(x)\,\varphi_x(t,x)\\
\hspace{2.5cm}+g(x)=0,\quad\forall(t,x)\in[0,T]\times\mathbb{R}\\
\varphi(T,x)=g_0(x),\quad x\in\mathbb{R}\,.
\end{cases}
\end{equation}
Moreover, if $\varphi$ is a smooth solution to \eqref{DPE} the associated feedback controller
\begin{equation}\label{feedback}
u(t)=\arg\min_{u}\Bigl\{\frac{1}{2}\,\sigma^2\,\varphi_{xx}\bigl(t,X(t)\bigr)\,u+H(u)\Bigr\}\,,
\end{equation}
is optimal for problem \eqref{min:pb}.

Up to our knowledge, in literature the rigorous treatment of existence theory for equation \eqref{DPE} has been shown, so far within the theory of viscosity solutions only. (See, e.g., \cite{crandall}.)
Here we shall exploit a different approach, namely  we use a suitable transformation aiming at reducing  \eqref{DPE} to an one dimensional Fokker-Planck equation which is then treated as a nonlinear Cauchy problem in $L^1(\mathbb{R})$.
The $n$-dimensional case is also studied in section~\ref{sec:nd}. As regards the non-degenerate hypothesis (3) it will be later on dispensed by assuming more regularity on function $\sigma$. (See section 4 below.)

\subsection{Notation and basic results}
We shall use the standard notation for functional spaces on $\mathbb{R}$. In particular $C^k_b(\mathbb{R})$ is the space of functions $y:\mathbb{R}\rightarrow\mathbb{R}$, differentiable of order $k$ and with bounded derivatives until order $k$. By $L^p(\mathbb{R})$, $1\le p\le\infty$, we denote the classical space of Lebesgue-measurable $p$-integrable functions on $\mathbb{R}$ with the norm $\norm{ \cdot }_p$ and by $H^k(\mathbb{R}^n)$, $W^{k,p}(\mathbb{R}^n)$, $k=1,2$, the standard Sobolev spaces on $\mathbb{R}^n$, $n=1,2$.
We set also $y_x=y'=\partial y/\partial x$, $y_t=\partial y/\partial t$, $y_{xx}=\partial^2y/\partial x^2$, for $x\in\mathbb{R}$ and $\Delta y(x)=\sum_{i=1}^n\frac{\partial^2 y}{\partial x_i^2}$, for $x\in\mathbb{R}^n$.
By $\mathcal{D}'(\mathbb{R}^n)$ we denote the space of  Schwartz distributions on $\mathbb{R}^n$.

\begin{definition}[Accretive operator]
Given a Banach space $X$, a nonlinear operator $A$ from $X$ to itself, with domain $D(A)$, is said to be \emph{accretive} if $\forall u_i\in D(A),\forall v_i\in A\,u_i$, $i=1,2$, there exists $\eta\in J(u_1-u_2)$ such that
\begin{equation}
_X\langle v_1-v_2,\eta\rangle_{X'}\ge0\, ,
\end{equation}
where $X'$ is the dual space of $X$, $_X\langle\cdot,\cdot\rangle_{X'}$ is the {duality pairing} and $J:X\rightarrow X'$ is the {\it duality mapping} of $X$. (See, e.g., \cite{B:1}.)
\\
An accretive operator $A$ is said to be \emph{$m$-accretive} if $\mathbb{R}(\lambda\,I+A)=X$ for all (equivalently some) $\lambda>0$, while it
is said to be $quasi-$m$-accretive$ if there is $\lambda_0\in\mathbb{R}$ such that $\lambda_0\,I+A$ is $m$-accretive.
\end{definition}
We refer to \cite{B:1} for basic results on $m$-accretive operators in Banach spaces and the corresponding associated Cauchy problem.

\section{Existence results}
We set
\begin{equation}\label{2:1}
y(t,x)=-\varphi_{xx}(T-t,x),\quad\forall t\in[0,T],x\in\mathbb{R},
\end{equation}
and we rewrite eq. \eqref{DPE:conj} as
\begin{equation}\label{DPE:conj:1}
\begin{cases}
y_t(t,x)-\Bigl(H^*\bigl(\frac{\sigma^2}{2}\,y(t,x)\bigr)\Bigr)_{xx}+f''(x)\varphi_x(T-t,x) \\
\hspace{0.5cm}-2f'(x)y(t,x) -f(x)y_x(t,x)=-g''(x),\\
\hspace{5.5cm}\text{in }(0,T)\times\mathbb{R}\\
y(0,x)=-g''_0(x),\quad x\in\mathbb{R}.
\end{cases}
\end{equation}

We recall (see \cite{benilan} for details), that, for $z\in L^1(\mathbb{R})$, the equation
\begin{equation}\label{eq:1}
-\Psi''=z,\quad\text{in }\mathcal{D}'(\mathbb{R})\,,
\end{equation}
has a unique solution $\Psi=\Phi(z)\in W^{1,\infty}(\mathbb{R})$ and $\| \Psi \|_{W^{1,\infty}(\mathbb{R})} \leq C \| z \|_{1}$.
Then by \eqref{2:1} we have
\begin{equation}\label{2:4}
\varphi(t,x)=-\Phi\bigl(y(T-t,x)\bigr)\in W^{1,\infty}(\mathbb{R}),\quad\forall t\in[0,T].
\end{equation}
Setting
\begin{equation}\label{2:6}
B\,y=-f''(\Phi(y))'-2\,f'y,\quad\forall y\in L^1(\mathbb{R})\,,
\end{equation}
and taking into account that $f'\in L^\infty(\mathbb{R})$, $f'' \in L^1(\mathbb{R})$, and $\| (\Phi(y))' \|_{\infty} \leq \|\Phi\|_{W^{1,\infty}(\mathbb{R})}\leq C\| y\|_{1}$, we obtain for operator $B$ the estimate
\begin{equation}
||B\,y||_{1}\le C\,||y||_{1},\quad\forall y\in L^1(\mathbb{R})\,.
\end{equation}
Therefore eq. \eqref{DPE:conj:1} can be rewritten as follows
\begin{equation}\label{eq:2}
\begin{cases}
y_t-\Bigl(H^*\bigl(\frac{\sigma^2}{2}\,y\bigr)\Bigr)_{xx}-f\,y_x+B\,y=g_1,\,\text{in }[0,T]\times\mathbb{R}\\
y(0)=y_0\in L^1(\mathbb{R})\,,
\end{cases}
\end{equation}
where $y_0=-g_0''$ and $g_1=-g''$ in $\mathcal{D}'(\mathbb{R})$.

\begin{definition}\label{mild}
The function $y\colon[0,T]\times\mathbb{R}\to\mathbb{R}$ is said to be a {\it mild} solution to equation \eqref{eq:2} if $y\in C([0,T];L^1(\mathbb{R}))$ and
\begin{equation}
y(t)=\lim_{\epsilon\rightarrow 0}y_\epsilon(t)\text{ in }L^1(\mathbb{R}),\quad\forall t\in[0,T]\;,
\end{equation}
\begin{equation}
y_\epsilon(t)=y_\epsilon^i,\text{ for }t\in[i\,\epsilon,(i+1)\,\epsilon],\,i=0,1,\dots,N=\Bigl[\frac{T}{\epsilon}\Bigr]\,,
\end{equation}
\begin{multline}\label{eq:3}
\frac{1}{\epsilon}\,(y_\epsilon^{i+1}-y_\epsilon^i)-\Bigl(H^*\bigl(\frac{\sigma^2}{2}\,y_\epsilon^{i+1}\bigr)\Bigr)''\\-f(y_\epsilon^{i+1})'+B\,y_\epsilon^{i+1}=g_1,\quad\text{in }\mathcal{D}'(\mathbb{R}),
\end{multline}
\[
y_\epsilon^0=y_0,\,y_\epsilon^i\in L^1(\mathbb{R}),\,i=0,1,\dots,N\,.
\]
\end{definition}

We have
\begin{theorem}\label{mild:sol}
Under hypotheses (1)-(3) eq. \eqref{DPE:conj:1} has a unique mild solution $y$.
Assume further that $j(\frac{\sigma^2}{2}\,y_0)\in L^1(\mathbb{R})$. Then $j(\frac{\sigma^2}{2}\,y_\epsilon)\in L^\infty([0,T];L^1(\mathbb{\mathbb{R}}))$ and $\left(H^*(\frac{\sigma^2}{2}\,y)\right)_x\in L^2([0,T]\times\mathbb{R})$. 
\end{theorem}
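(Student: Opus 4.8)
The plan is to recast \eqref{eq:2} (equivalently \eqref{DPE:conj:1}) as an abstract Cauchy problem $y'+Ay=g_1$ in $X=L^1(\mathbb{R})$ and to invoke the nonlinear semigroup theory, since the time discretization in Definition~\ref{mild} is exactly the implicit Euler scheme whose convergence is guaranteed by the Crandall--Liggett theorem once $A$ is shown to be quasi-$m$-accretive. Accordingly I would set $Ay=-(H^*(\frac{\sigma^2}{2}y))_{xx}-f\,y_x+By$ on $D(A)=\{y\in L^1(\mathbb{R}):Ay\in L^1(\mathbb{R})\}$, the equation being read in $\mathcal{D}'(\mathbb{R})$, and split the verification into two steps: (i) quasi-accretivity and (ii) the range condition $R(I+\lambda A)=L^1(\mathbb{R})$.

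For (i), the $L^1$-accretivity is tested against $\sgn(y_1-y_2)$, or a smooth monotone approximation thereof. Since $H^*$ is nondecreasing by \eqref{eq:5prime} and $\frac{\sigma^2}{2}>0$, the nonlinear diffusion $-(H^*(\frac{\sigma^2}{2}y))_{xx}$ is dissipative in $L^1$ through Kato's inequality (Brezis--Strauss), contributing a nonnegative term; the transport term yields, after integration by parts, $\int f'\,|y_1-y_2|\,dx\le\norm{f'}_\infty\norm{y_1-y_2}_1$, and the bound $\norm{By}_1\le C\norm{y}_1$ already established makes $B$ globally Lipschitz. Hence $A+\lambda_0 I$ is accretive with $\lambda_0=\norm{f'}_\infty+C$.

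Step (ii) is the main obstacle. Given $g\in L^1(\mathbb{R})$ one must solve the stationary degenerate problem $y-\lambda(H^*(\frac{\sigma^2}{2}y))_{xx}-\lambda f\,y_x+\lambda By=g$. I would regularize it (adding a small uniformly elliptic perturbation, or truncating to a bounded interval, with the operator $\Phi$ of \eqref{eq:1} handling the nonlocal coupling in $B$), solve the regularized monotone problem, derive $\lambda$-uniform $L^1$ a~priori bounds from the accretivity estimates of the previous step, and pass to the limit. The delicate feature is the degeneracy of $H^*$: by \eqref{eq:5prime} the derivative $(H^*)'$ vanishes on a half-line, so the diffusion is of porous-medium/Stefan type and the passage to the limit must be carried out within the $L^1$/monotone-operator framework of \cite{benilan,B:1} rather than by classical elliptic regularity. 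Quasi-$m$-accretivity then produces the unique mild solution $y\in C([0,T];L^1(\mathbb{R}))$.

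For the second assertion I would work with the Euler iterates $y_\epsilon^{i+1}$ and test \eqref{eq:3} with $H^*(\frac{\sigma^2}{2}y_\epsilon^{i+1})$. Writing $\zeta=\frac{\sigma^2}{2}$, which is bounded and, by \eqref{eq:4}, bounded below by $\frac{\rho^2}{2}$, I introduce the convex density $e(x,r)=\frac1{\zeta(x)}\,j(\zeta(x)r)$, whose derivative in $r$ is exactly $H^*(\zeta r)$; then convexity bounds the discrete time term below by $\int(e(x,y_\epsilon^{i+1})-e(x,y_\epsilon^i))\,dx$, while the diffusion term yields the dissipation $\norm{(H^*(\zeta y_\epsilon^{i+1}))_x}_2^2$. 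It remains to dominate the transport, $B$ and forcing contributions: integrating the transport term by parts rewrites it as $\int f'\,e\,dx+\int f\,\partial_x e\,dx$, and the growth estimate $\abs{H^*(p)}\le C(p^2+1)$ from \eqref{eq:5prime}, hence $\abs{j(r)}\le C(\abs{r}^3+1)$, together with the uniform bound $\norm{y_\epsilon^i}_1\le C$ from Step~(i), the inclusions $f'\in L^\infty$ and $f''\in L^1\cap L^\infty$, and Hölder/Young inequalities, bound all three by $C(\int e(x,y_\epsilon^{i+1})\,dx+1)$. A discrete Gronwall argument, started from the hypothesis $j(\zeta y_0)\in L^1(\mathbb{R})$, then gives $\sup_i\int e(x,y_\epsilon^i)\,dx+\sum_i\epsilon\,\norm{(H^*(\zeta y_\epsilon^{i+1}))_x}_2^2\le C$ uniformly in $\epsilon$; the first term is the asserted bound $j(\zeta y_\epsilon)\in L^\infty([0,T];L^1(\mathbb{R}))$, and letting $\epsilon\to0$ with weak lower semicontinuity of the $L^2$ norm yields $(H^*(\zeta y))_x\in L^2([0,T]\times\mathbb{R})$. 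The subtle point here is the rigorous justification of the test, namely ensuring $H^*(\zeta y_\epsilon^{i+1})\in H^1(\mathbb{R})$ so that the integrations by parts are licit, which I would handle by a further approximation of the iterates before passing to the limit.
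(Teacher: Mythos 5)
Your first assertion is attacked exactly as in the paper: the paper writes \eqref{eq:2} as $\frac{dy}{dt}+Ay+By=g_1$ with $Ay=-\bigl(H^*\bigl(\frac{\sigma^2}{2}y\bigr)\bigr)''-fy'$, proves in Lemma \ref{lemma3:1} that $A$ is quasi-$m$-accretive, treats $B$ as a Lipschitz perturbation, and invokes Crandall--Liggett. Your step (ii) is the same plan in spirit (elliptic regularization, monotone operators, uniform $L^1$ bounds, limit), but be aware that you have deferred precisely what constitutes the technical heart of the paper: the specific regularization \eqref{eq:9}, its reformulation as a coercive-plus-maximal-monotone problem in $L^2(\mathbb{R})$, the $\nu$-uniform estimate \eqref{3:19} on $\norm{H^*\bigl(\frac{\sigma^2}{2}y_{\lambda,\nu}\bigr)}_\infty$ (which is what allows the nonlinear term to pass to the limit in $L^1$), the extension of the range condition from $\lambda\ge\lambda^*(\nu)$ down to all $\lambda>\lambda_0$ by a contraction argument, and the final limit $\nu\to0$. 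As a proposal this is a correct direction but not yet a proof.

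The genuine gap is in your proof of the second assertion. After testing \eqref{eq:3} with $H^*(\zeta y_\epsilon^{i+1})$, $\zeta=\frac{\sigma^2}{2}$, the integration by parts of the transport term does not only produce $\int f'\,e\,dx$: because $\zeta$ depends on $x$, it produces a term of the form $\int f\,\frac{\zeta'}{\zeta}\,y\,H^*(\zeta y)\,dx$, i.e.\ an integral of $v\,H^*(v)$. By convexity one always has $j(v)\le v\,H^*(v)$, and the reverse inequality $v\,H^*(v)\le C\,j(v)$ is \emph{not} a consequence of \eqref{eq:5prime}; it is equivalent to the doubling condition $j(\lambda v)\le C_\lambda j(v)$, which is exactly estimate \eqref{3:23} that the paper derives from the extra hypothesis $h(\lambda u)\le C_\lambda h(u)$ stated in Theorem \ref{thm:2} (the statement of Theorem \ref{mild:sol} omits it, but the paper's own proof invokes it). Your substitute — the upper bounds $H^*(p)\le C(p^2+1)$, $j(r)\le C(\abs{r}^3+1)$, the $L^1$ bound on the iterates, and H\"older/Young — cannot close the Gronwall inequality: it gives $\int\abs{y}H^*(\zeta y)\,dx\le C\int(\abs{y}^3+\abs{y})\,dx$, and to dominate $\int\abs{y}^3dx$ by $C\bigl(\int e\,dx+1\bigr)$ you would need a matching \emph{lower} bound $j(r)\ge c\abs{r}^3$, which is not available (only the upper bound holds; $j$ may grow much more slowly than cubically). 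Using an upper bound on $j$ in that direction is circular, so the discrete Gronwall argument as you describe it does not close. To repair it you must either assume the doubling condition and use \eqref{3:23} as the paper does, or establish a uniform $L^\infty$ bound on $H^*(\zeta y_\epsilon^{i+1})$ for the iterates (in the spirit of \eqref{3:19}, suitably extended to the resolvent of $A+B$) and bound the offending term by $\norm{H^*(\zeta y_\epsilon^{i+1})}_\infty\norm{y_\epsilon^{i+1}}_1$ — neither of which appears in your argument.
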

Theorem \ref{mild:sol} will be proven by using the standard existence theory for the Cauchy problem in Banach spaces with nonlinear quasi-$m$-accritive operators.
Now taking into account that for $y\in C([0,T];L^1(\mathbb{R}))$ equation \eqref{eq:1} uniquely defines the function $\varphi\in C([0,T];W^{1,\infty}(\mathbb{R}))$, by Theorem \ref{mild:sol} we obtain the following existence result for the dynamic programming equation \eqref{DPE}.

\begin{theorem}\label{thm:2}
Under hypothesis (1)-(3) there is a unique {\it mild} solution
\begin{equation}\label{2:11}
\varphi\in C\bigl([0,T];W^{1,\infty}(\mathbb{R})\bigr) \;,\;
\varphi''\in C\bigl([0,T];L^1(\mathbb{R})\bigr)\,,
\end{equation}
to equation \eqref{DPE}. Moreover, if $h(\lambda u)\le C_\lambda h(u)$ $\forall u\in\mathbb{R}$, $\lambda>0$ and $j(-\frac{\sigma^2}{2}\,g''_0)\in L^1(\mathbb{R})$, then $H^*\bigl(-\frac{\sigma^2}{2}\,\varphi_{xx}(T-t,x)\bigr)\in L^2([0,T]\times\mathbb{R})$.
\end{theorem}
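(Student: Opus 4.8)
The plan is to read off $\varphi$ directly from the mild solution $y$ of the Fokker--Planck problem \eqref{DPE:conj:1} that Theorem~\ref{mild:sol} provides, and then transport all the regularity and integrability information through the solution operator $\Phi$ of \eqref{eq:1}. By Theorem~\ref{mild:sol}, equation \eqref{DPE:conj:1} (equivalently \eqref{eq:2}) has a unique mild solution $y\in C([0,T];L^1(\mathbb{R}))$. I define $\varphi$ by \eqref{2:4}, i.e.\ $\varphi(t,\cdot)=-\Phi(y(T-t,\cdot))$. Since \eqref{eq:1} shows that $\Phi\colon L^1(\mathbb{R})\to W^{1,\infty}(\mathbb{R})$ is linear with $\norm{\Phi(z)}_{W^{1,\infty}(\mathbb{R})}\le C\norm{z}_1$, the curve $t\mapsto\varphi(t,\cdot)$ inherits the continuity of $t\mapsto y(T-t,\cdot)$, so $\varphi\in C([0,T];W^{1,\infty}(\mathbb{R}))$. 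Moreover, by \eqref{2:1} one has $\varphi_{xx}(t,\cdot)=-y(T-t,\cdot)$, and since $y\in C([0,T];L^1(\mathbb{R}))$ this gives $\varphi''\in C([0,T];L^1(\mathbb{R}))$. The correspondence $\varphi\leftrightarrow y$ set up by \eqref{2:1} and \eqref{2:4} is a bijection between solutions of \eqref{DPE} and of \eqref{eq:2}, so both existence and uniqueness of $\varphi$ follow at once from the uniqueness statement for $y$.

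For the second assertion I first note, again via \eqref{2:1}, that $-\frac{\sigma^2}{2}\varphi_{xx}(T-t,x)=\frac{\sigma^2}{2}y(t,x)$, so it suffices to prove $H^*(\frac{\sigma^2}{2}y)\in L^2([0,T]\times\mathbb{R})$. Because $y_0=-g_0''$, the hypothesis $j(-\frac{\sigma^2}{2}g_0'')\in L^1(\mathbb{R})$ is precisely $j(\frac{\sigma^2}{2}y_0)\in L^1(\mathbb{R})$, so the second part of Theorem~\ref{mild:sol} is available and yields $j(\frac{\sigma^2}{2}y)\in L^\infty([0,T];L^1(\mathbb{R}))$ together with $(H^*(\frac{\sigma^2}{2}y))_x\in L^2([0,T]\times\mathbb{R})$. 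Writing $w=H^*(\frac{\sigma^2}{2}y)$, the task reduces to upgrading the $L^2$ bound on $w_x$ to an $L^2$ bound on $w$ itself.

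This is exactly where the doubling hypothesis $h(\lambda u)\le C_\lambda h(u)$ is used. Via Legendre duality it provides a matching growth/doubling control on $H^*$ and its potential $j$; in particular it forces the normalization $H^*(0)=0$ and the pointwise comparison $\abs{H^*(r)}\le C\,j(r)$ for $\abs{r}$ large, while \eqref{eq:5prime} already gives $\abs{H^*(r)}\le C\abs{r}$ for $\abs{r}$ small. Splitting the spatial integral at $\{\abs{\frac{\sigma^2}{2}y}\ge 1\}$ and using $j(\frac{\sigma^2}{2}y)\in L^\infty_tL^1_x$ on the large set together with $y\in C([0,T];L^1(\mathbb{R}))$ on the small set, I obtain $w\in L^\infty([0,T];L^1(\mathbb{R}))$. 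I then invoke the one-dimensional Gagliardo--Nirenberg inequality $\norm{w}_2\le C\norm{w}_1^{2/3}\norm{w_x}_2^{1/3}$; squaring, integrating in $t$, and applying H\"older's inequality in time with $\sup_t\norm{w(t)}_1<\infty$ and $w_x\in L^2([0,T]\times\mathbb{R})$ closes the estimate and gives $w\in L^2([0,T]\times\mathbb{R})$, which is the claim.

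The transfer through $\Phi$ in the first paragraph is routine. The genuinely load-bearing step is the last one: since no Poincar\'e inequality is available on the whole line, the gradient bound $w_x\in L^2$ cannot by itself control $w$, and everything hinges on extracting from the doubling condition the correct pointwise comparison between $H^*$ and $j$ --- most crucially the normalization $H^*(0)=0$, without which $w$ would fail even to lie in $L^1_x$ --- so that the $L^\infty_tL^1_x$ bound on $j$ passes to $w$ and feeds the interpolation. I expect verifying that dual comparison to be the main obstacle.
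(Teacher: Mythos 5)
Your first paragraph is correct and coincides with the paper's own proof of the main assertion: the paper obtains Theorem~\ref{thm:2} exactly by defining $\varphi$ from the mild solution $y$ of \eqref{eq:2} through the solution operator of \eqref{eq:1} (formulas \eqref{2:1}, \eqref{2:4}), so that $\varphi\in C([0,T];W^{1,\infty}(\mathbb{R}))$, $\varphi_{xx}=-y(T-\cdot)\in C([0,T];L^1(\mathbb{R}))$, and existence/uniqueness are inherited from Theorem~\ref{mild:sol}.

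The ``moreover'' part is where your proposal has a genuine gap, and it sits precisely at the step you yourself flagged as the main obstacle: the doubling hypothesis $h(\lambda u)\le C_\lambda h(u)$ does \emph{not} force $H^*(0)=0$. Counterexample: $h(u)=u^2+1$ is convex, continuous, satisfies Hypothesis (1) (with $\alpha_1=\alpha_2=1$) and $h(\lambda u)\le\max(1,\lambda^2)\,h(u)$, yet $H^*(0)=-\inf_{u\ge0}h(u)=-1$; indeed here $H^*(p)=p^2/4-1$ for $p\ge0$ and $H^*(p)=-1$ for $p\le0$. For such an $h$, your function $w=H^*\bigl(\frac{\sigma^2}{2}\,y\bigr)$ satisfies $\abs{w(t,x)}\ge\frac12$ outside a set of finite measure in $x$ (since $y(t,\cdot)\in L^1(\mathbb{R})$ forces $\frac{\sigma^2}{2}y(t,x)$ to be small off a finite-measure set, and $H^*$ is continuous at $0$), so $w(t,\cdot)$ lies in neither $L^1(\mathbb{R})$ nor $L^2(\mathbb{R})$ for any $t$. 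Hence not only does your $L^\infty_t L^1_x$ bound fail: the quantity you want to control by Gagliardo--Nirenberg is infinite, so no interpolation argument can close the gap --- under the stated hypotheses the claim, read literally, is false. (A second, independent, defect: the comparison $vH^*(v)\le C\,j(v)$, i.e.\ \eqref{3:23}, is derived in the paper from the doubling condition imposed on $j$, not on $h$, and it too fails for $h(u)=u^2+1$: at $v=2\sqrt{3}$ one has $j(v)=0$ while $vH^*(v)=4\sqrt{3}>0$.) The paper itself never attempts the upgrade you are after: its proof of Theorem~\ref{mild:sol} yields $j\bigl(\frac{\sigma^2}{2}y\bigr)\in L^\infty([0,T];L^1(\mathbb{R}))$ and $\bigl(H^*\bigl(\frac{\sigma^2}{2}y\bigr)\bigr)_x\in L^2([0,T]\times\mathbb{R})$, and the ``moreover'' clause of Theorem~\ref{thm:2} is intended as the transcription of these conclusions under \eqref{2:1}; the only tenable reading is with the outer spatial derivative. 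For the record, your outline does become correct under the extra normalization $\inf_{u\ge0}h(u)=0$ (so $H^*(0)=0$, $H^*$ and $j$ have the sign of their argument) together with doubling assumed on $j$: then \eqref{3:23} bounds $\int_{\{\abs{v}\ge1\}}\abs{H^*(v)}\,dx$ by $\int j(v)\,dx$, the Lipschitz bound from \eqref{eq:5prime} handles $\{\abs{v}<1\}$, and your Gagliardo--Nirenberg step $\norm{w}_2\le C\norm{w}_1^{2/3}\norm{w_x}_2^{1/3}$ is then legitimate.
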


According to the Definition \ref{mild} and \eqref{2:4}, by {\it mild} solution $\varphi$ to equation \eqref{DPE}, we mean a function
$\varphi\in C([0,T];W^{1,\infty}(\mathbb{R}))$ defined by
\begin{equation}\label{2:12}
\varphi(t)=\lim_{\epsilon\rightarrow0}\varphi_\epsilon(t)\text{ in }W^{1,\infty}(\mathbb{R}),\quad\forall t\in[0,T] \,,
\end{equation}
\begin{equation}\label{2:13}
\varphi_\epsilon(t)=\Psi(y_\epsilon^i),\quad t\in[T-(i+1)\,\epsilon,T-i\,\epsilon],\,
\end{equation}
for $i=0,1,\dots,N=\left[\frac{T}{\epsilon}\right]$ and $\{y_\epsilon^i\}$ is the solution to \eqref{eq:3}.

In particular, the {\it mild} solution $\varphi$ to equation \eqref{DPE} is in $H_{\text{loc}}^2(\mathbb{R})\cap W^{1,\infty}(\mathbb{R})$. Therefore, the feedback controller \eqref{feedback} is well defined on $[0,T]$.

\begin{remark}
The principal advantage of Theorem \ref{mild:sol} compared with standard existence results expressed in terms of viscosity solutions is the regularity of $\varphi$ and the fact that the optimal feedback controller can be computed explicitly by the finite difference scheme \eqref{2:12}-\eqref{2:13}. This will be treated in a forthcoming paper.
\end{remark}

\section{Proof of Theorem \ref{mild:sol}}
The idea is to write equation \eqref{eq:2} as a Cauchy problem of the form
\begin{equation}\label{3:1}
\begin{cases}\frac{dy}{dt}+A\,y+B\,y=g_1,\quad\text{in }[0,T]\\
y(0)=y_0
\end{cases}\,,
\end{equation}
in the space $L^1(\mathbb{R})$, where $A$ is a suitable nonlinear quasi-$m$-accretive operator. The operator $A:D(A)\subset L^1(\mathbb{R})\rightarrow L^1(\mathbb{R})$ is defined as follows
\begin{equation}
A\,y=-\Bigl(H^*\bigl(\frac{\sigma^2}{2}\,y\bigr)\Bigr)''-f\,y'\quad\text{ in }\mathcal{D}'(\mathbb{R}),\,\forall y\in D(A)\,,
\end{equation}
\begin{equation*}
\begin{aligned}
D(A)=\Bigl\{y\in L^1(\mathbb{R}):H^*\bigl(\frac{\sigma^2\,y}{2}\bigr)\in L^\infty(\mathbb{R}),&\\Ay\in L^1&(\mathbb{R})\Bigr\}\,,
\end{aligned}
\end{equation*}
where the derivatives are taken in the $\mathcal{D}'(\mathbb{R})$ sense.
\begin{lemma}\label{lemma3:1}
For each $\eta\in L^1(\mathbb{R})$ and $\lambda\ge\lambda_0=||f'||_{\infty}$ there exists a unique solution $y=y(\eta)$ to equation
\begin{equation}\label{3:3}
\lambda\,y+A\,y=\eta.
\end{equation}
Moreover, it holds
\begin{equation}\label{3:4}
||y(\eta)-y(\bar{\eta})||_{1}\le(\lambda-\lambda_0)^{-1}\,||\eta-\bar{\eta}||_{1}\,,
\end{equation}
$\forall\eta,\bar{\eta}\in L^1(\mathbb{R}),\lambda>\lambda_0$,
hence $A$ turns to be quasi-$m$-accretive in $L^1(\mathbb{R})$.
\end{lemma}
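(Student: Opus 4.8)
The plan is to prove the two assertions of Lemma~\ref{lemma3:1} separately: the quantitative $L^1$-estimate \eqref{3:4}, which simultaneously gives uniqueness and the accretivity of $\lambda_0 I+A$, and the surjectivity of $\lambda I+A$ for $\lambda\ge\lambda_0$, which supplies the range condition needed for quasi-$m$-accretivity. Throughout set $a(x)=\tfrac{\sigma^2(x)}{2}$, so that $a(x)\ge\tfrac{\rho^2}{2}>0$ by hypothesis~(3), and note that by \eqref{eq:5prime} the map $r\mapsto H^*(a(x)\,r)$ is nondecreasing and Lipschitz, with a derivative of at most linear growth.

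For the contraction estimate I would take two solutions $y=y(\eta)$, $\bar y=y(\bar\eta)$ of \eqref{3:3}, subtract the equations, and test the difference against a smooth monotone approximation of $\sgn(y-\bar y)$, letting the regularization tend to $0$ at the end. The transport term contributes, after integration by parts,
\begin{equation*}
-\int_{\mathbb{R}}f\,(y-\bar y)'\,\sgn(y-\bar y)\,dx=\int_{\mathbb{R}}f'\,|y-\bar y|\,dx\ge-\lambda_0\,\|y-\bar y\|_1 .
\end{equation*}
For the second-order term, writing $w:=H^*(a\,y)-H^*(a\,\bar y)$ and noting that $\sgn(w)=\sgn(y-\bar y)$ on $\{w\neq0\}$ by monotonicity of $r\mapsto H^*(a\,r)$, Kato's inequality $\sgn(w)\,w''\le(|w|)''$ in $\mathcal{D}'(\mathbb{R})$ integrates to $-\int_{\mathbb{R}}w''\,\sgn(w)\,dx\ge0$. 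Collecting the terms, and using $\lambda\int_{\mathbb{R}}|y-\bar y|$ on the left, yields $(\lambda-\lambda_0)\,\|y-\bar y\|_1\le\|\eta-\bar\eta\|_1$, which is \eqref{3:4}; the choice $\eta=\bar\eta$ gives uniqueness.

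For existence I would argue by regularization, since the diffusion coefficient $a\,(H^*)'(a\,y)$ may vanish (degenerate diffusion) and $L^1(\mathbb{R})$ is not reflexive. First approximate $\eta$ in $L^1(\mathbb{R})$ by $\eta_\epsilon\in L^1(\mathbb{R})\cap L^2(\mathbb{R})$ and consider the uniformly elliptic problem
\begin{equation*}
\lambda\,y_\epsilon-\bigl(H^*(a\,y_\epsilon)\bigr)''-\epsilon\,y_\epsilon''-f\,y_\epsilon'=\eta_\epsilon .
\end{equation*}
After the substitution $z=H^*(a\,y_\epsilon)+\epsilon\,y_\epsilon$, a strictly increasing change of unknown with Lipschitz inverse, this becomes a semilinear equation whose principal part $-z''$ is coercive, hence solvable in $H^1(\mathbb{R})$ by the theory of maximal monotone (coercive) operators (truncating to a bounded interval with Dirichlet data and letting the interval tend to $\mathbb{R}$, if preferred). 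The same sign computation as above, now run on the regularized equation, gives the uniform bound $(\lambda-\lambda_0)\,\|y_\epsilon\|_1\le\|\eta_\epsilon\|_1\le C$, and the contraction estimate holds at the regularized level as well.

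The final step is to let $\epsilon\to0$, where compactness must come from the $L^1$-contraction rather than from weak $H^1$ bounds: the estimate controls $\{y_\epsilon\}$ in $L^1(\mathbb{R})$ by the data, so one passes to the limit and obtains $y\in L^1(\mathbb{R})$ solving \eqref{3:3}, with $H^*(a\,y)\in L^\infty(\mathbb{R})$ (via the $W^{1,\infty}$-control inherited from $\Phi$) and $Ay=\eta-\lambda y\in L^1(\mathbb{R})$, i.e.\ $y\in D(A)$; passing to the limit in \eqref{3:4} preserves the estimate, and the range condition for $\lambda>\lambda_0$ yields that $\lambda_0 I+A$ is $m$-accretive. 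The main obstacle is precisely this interplay of degeneracy and non-reflexivity: the $\epsilon\,y_\epsilon''$ regularization is forced by the possible vanishing of $(H^*)'$, yet it must be removed using only the $L^1$-accretivity structure, and the rigorous justification of the Kato sign computation for the merely $L^1$ elements of $D(A)$ — in particular the handling of the degenerate set $\{w=0\}$ where $\sgn(w)$ and $\sgn(y-\bar y)$ may differ — is the technical heart of the argument.
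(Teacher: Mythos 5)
Your proposal follows the same broad route as the paper: elliptic regularization, solvability of the regularized problem by monotone-operator theory in $L^2$, the $L^1$ contraction \eqref{3:4} obtained by testing with a smooth approximation of $\sgn(y-\bar y)$ (used simultaneously as a selection of $\sgn\bigl(H^*(\frac{\sigma^2}{2}y)-H^*(\frac{\sigma^2}{2}\bar y)\bigr)$, plus Kato's inequality and integration by parts in the transport term), approximation of $\eta\in L^1(\mathbb{R})$ by $L^1\cap L^2$ data, and a final limit. Your contraction argument is essentially the paper's derivation of \eqref{3:12}, and the concern you raise about justifying the sgn computation on $D(A)$ is circumvented, both by you and by the paper, by running it at the regularized level where solutions lie in $H^1(\mathbb{R})$. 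The implementations differ, however: you regularize with the pure viscosity term $-\epsilon y''$ and substitute $z=H^*(\frac{\sigma^2}{2}y_\epsilon)+\epsilon y_\epsilon$, writing $y_\epsilon=\beta_\epsilon(x,z)$ for the inverse, whereas the paper adds $-\nu y''+\nu H^*(\frac{\sigma^2}{2}y)$, chosen precisely so that applying $(\nu I-\frac{d^2}{dx^2})^{-1}$ converts \eqref{eq:9} into \eqref{eq:8}, i.e.\ ``coercive bounded linear operator $\Gamma$ plus maximal monotone $\mathcal{H}$'' in $L^2(\mathbb{R})$. At the two places where the paper does its real work, your version has genuine gaps.

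First, solvability of your regularized equation is not established. After the substitution, the equation for $z$ still contains the non-monotone first-order term $-f\,\bigl(\beta_\epsilon(x,z)\bigr)'$; the principal part $-z''$ is not coercive on $H^1(\mathbb{R})$ (there is no Poincar\'e inequality on the line, and on expanding intervals the Poincar\'e constant blows up, so the truncation argument is not uniform), and $\lambda\int_{\mathbb{R}}\beta_\epsilon(x,z)\,z\,dx$ does not dominate $\norm{z}_2^2$ with a usable constant. Absorbing the transport term therefore forces $\lambda$ to be large depending on the regularization parameter --- exactly the restriction $\lambda\ge C(\frac{1}{\nu}+\nu^2)$ of \eqref{lambda-inequality} --- after which a separate argument is needed to recover all $\lambda>\lambda_0$: the paper does this via the contraction principle applied to \eqref{3:21}, exploiting that the Lipschitz constant in \eqref{3:12} is independent of $\nu$. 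Your proposal contains no counterpart of either step, so ``solvable in $H^1$ by the theory of maximal monotone (coercive) operators'' does not go through as stated.

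Second, the limit $\epsilon\to0$ cannot be carried by the $L^1$ bound alone. A uniform bound $\norm{y_\epsilon}_1\le C$ gives no compactness in $L^1(\mathbb{R})$ (bounded sets of $L^1$ are not weakly sequentially precompact; limits may be measures), and even granted some weak limit you cannot pass to the limit in the nonlinear term $H^*(\frac{\sigma^2}{2}y_\epsilon)$ without strong convergence or a Minty-type argument. Note also that the contraction estimate compares two solutions of the \emph{same} regularized equation with different data; it does not compare $y_\epsilon$ with $y_{\epsilon'}$, hence does not make $\{y_\epsilon\}$ Cauchy as $\epsilon$ varies. In the paper, strong $L^1$ convergence is available only in the data-approximation step (where \eqref{3:12} is applied to the differences $\eta_n-\eta_m$), while the limit in the regularization parameter rests on the additional uniform estimates \eqref{3:17}--\eqref{3:19}, namely $\norm{H^*(\frac{\sigma^2}{2}y_{\lambda,\nu})}_\infty+\nu\norm{y_{\lambda,\nu}}_\infty\le\frac{C_1}{\lambda-\lambda_0}\norm{\eta}_1$ with $C_1$ independent of $\nu$ (obtained from the $W^{1,\infty}$ bound for $(\nu I-\frac{d^2}{dx^2})^{-1}$ and the $\theta_n$ computation), together with the strong resolvent convergence \eqref{3:22}. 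Without analogues of these estimates for your scheme, the existence half of the lemma remains unproved.
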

\begin{proof}[Proof of Lemma \ref{lemma3:1}]
Assume first that $\eta\in L^1(\mathbb{R})\cap L^2(\mathbb{R})$.
For each $\nu>0$ consider the equation
\begin{equation}\label{eq:9}
\lambda\,y-\nu\,y''-\Bigl(H^*\bigl(\frac{\sigma^2}{2}\,y\bigr)\Bigr)''+\nu\,H^*\bigl(\frac{\sigma^2}{2}\,y\bigr)-f\,y'=\eta,
\end{equation}
in $\mathcal{D}'(\mathbb{R})$. Equivalently,
\begin{multline}\label{eq:8}
(\lambda-\nu^2)\Bigl(\nu\,I-\frac{d^2}{dx^2}\Bigr)^{-1}\,y+H^*(\frac{\sigma^2}{2}\,y)+\nu\,y\\-\Bigl(\nu\,I-\frac{d^2}{dx^2}\Bigr)^{-1}\,(f\,y')=\Bigl(\nu\,I-\frac{d^2}{dx^2}\Bigr)^{-1}\,\eta\,,
\end{multline}
where  $z=\bigl(\nu\,I-\frac{d^2}{dx^2}\bigr)^{-1}\,y$ is defined by equation
\begin{equation}\label{eq:6}
\nu\,z-z''=y,\quad\text{in }\mathcal{D}'(\mathbb{R})\,.
\end{equation}
Note that by Hypothesis (2) the operator $\Gamma\,y=(\lambda-\nu^2)\bigl(\nu I-\frac{d^2}{dx^2}\bigr)^{-1}y - \bigl(\nu I-\frac{d^2}{dx^2}\bigr)^{-1}(fy')+\nu y$ is linear continuous in $L^2(\mathbb{R})$ and by \eqref{eq:6} we have that
\begin{equation}
\langle z,y\rangle_2=\nu\,||z||^2_2+||z'||_2^2\,,
\end{equation}
\begin{equation}
\begin{aligned}
-\Bigl\langle\Bigl(\nu\,I-\frac{d^2}{dx^2}\Bigr)^{-1}\,(f\,y'),\,y\Bigr\rangle_2&=-\langle f\,y',z\rangle_2\\
&=  \langle y,f'\,z+f\,z'\rangle_2\\
&\le||f'||_\infty\,||y||_2\,||z||_2\\
&\hspace{0.5cm}+||f||_\infty\,||y||_2\,||z'||_2.\label{eq:7}
\end{aligned}
\end{equation}
Here $||\cdot||_2$ and $\langle\cdot,\cdot\rangle_2$ are the norm and the scalar product in $L^2(\mathbb{R})$, respectively, and by $||\cdot||_p$, $1\le p\le \infty$ we denote the norm of $L^p(\mathbb{R})$.
We note that  Hypothesis (1) and \eqref{eq:5} imply that the function $H^*$ is continuous, monotonically non--decreasing, and
\begin{equation}
C_1\le H^*(v)\le C_2\,v^2,\quad\forall v\in\mathbb{R}\,.
\end{equation}
Furthermore, by \eqref{eq:6}-\eqref{eq:7}, we have
\begin{align*}
\langle\Gamma\,y,y\rangle_2&=\nu\norm{y}^2_2 + (\lambda-\nu^2) \langle y,z\rangle_2-\langle f\,y',z\rangle_2\\
&\ge \nu\norm{y}_2^2+(\lambda-\nu^2)(\nu\norm{z}^2_2+\norm{z'}^2_2) \\
&\hspace{1cm}- \norm{y}_2(\norm{f'}_\infty \norm{z}_2+\norm{f}_\infty\norm{z'}_2)\\
&\ge \nu\norm{y}_2^2+(\lambda-\nu^2)(\nu\norm{z}^2_2+\norm{z'}^2_2) \\
&\hspace{1cm}- C(f) \norm{y}_2( \norm{z}_2+\norm{z'}_2)
\end{align*}
The latter yields
\begin{equation}\label{lambda-inequality}
\langle\Gamma\,y,y\rangle_2\ge\frac{\nu}{2}\,||y||_2^2,\quad\lambda\ge C\left(\frac{1}{\nu}+\nu^2\right),\forall\nu>0\,,
\end{equation}
where $C$ is dependent on $\nu$.
By assumption (3) we have that the operator $y\rightarrow\mathcal{H}(y)\equiv H^*\bigl(\frac{\sigma^2}{2}\,y\bigr)$ is maximal monotone in $L^2(\mathbb{R})$, hence, by \eqref{lambda-inequality}, $\Gamma$ is maximal monotone and coercive, i.e. positively definite, therefore we have	 
\[
\mathbb{R}(\Gamma+\mathcal{H})=L^2(\mathbb{R})\,,
\]
for $\lambda\ge\lambda^*= C(\frac{1}{\nu}+\nu^2)$. Consequently, for each $\nu>0$ and $\lambda\ge\lambda^*$, eq. \eqref{eq:8} (equivalently eq. \eqref{eq:9}) has a unique solution $y=y_{\lambda,\nu}\in L^2(\mathbb{R})$, with $H^*\bigl(\frac{\sigma^2}{2}\,y_{\lambda,\nu}\bigr)\in L^2(\mathbb{R})$.

We  have also
\[
z_{\lambda,\nu}+z_{\lambda,\nu}''\in L^2(\mathbb{R})\,,
\]
so that $z_{\lambda,\nu}\in H^2(\mathbb{R})$. 

Since by assumption (3) the operator $z\rightarrow \nu z+H^*\bigl(\frac{\sigma^2}{2}\,z\bigr)$ is invertible in $L^2(\mathbb{R})$, and its inverse maps inverse $H^1(\mathbb{R})$ into itself, we infer that $y_{\lambda,\nu}\in H^1(\mathbb{R})$.

It is worth to mention that by \eqref{eq:9}, we have
\begin{multline*}
\lambda\norm{y_{\lambda,\nu}(\eta)-y_{\lambda,\nu}(\bar \eta)}_1 \le\\ \norm{f'}_\infty\norm{y_{\lambda,\nu}(\eta)-y_{\lambda,\nu}(\bar \eta)}_1+\norm{\eta-\bar\eta}_1
\end{multline*}
$\forall\eta,\bar\eta\in L^1(\mathbb{R})$, so that  
\begin{equation}\label{3:12}
\norm{y_{\lambda,\nu}(\eta)-y_{\lambda,\nu}(\bar \eta)}_1\le\frac{1}{\lambda-\lambda_0}\norm{\eta-\bar\eta}_1\,,
\end{equation}
$\forall\eta,\bar\eta\in L^1(\mathbb{R})$, for $\lambda\ge\max\left(\lambda_0,\lambda^*\right)$ and where $\lambda_0=\norm{f'}_\infty$.
To get  \eqref{3:12}, we simply multiply the equation
\begin{multline*}
\lambda(y_{\lambda,\nu}(\eta)-y_{\lambda,\nu}(\bar \eta))-\nu (y_{\lambda,\nu}(\eta)-y_{\lambda,\nu}(\bar \eta))''\\
+\nu\left(H^*\left(\frac{\sigma^2}{2}y_{\lambda,\nu}(\eta)\right)-H^*\left(\frac{\sigma^2}{2}y_{\lambda,\nu}(\bar\eta)\right)\right)\\
-\left(H^*\left(\frac{\sigma^2}{2}y_{\lambda,\nu}(\eta)\right)-H^*\left(\frac{\sigma^2}{2}y_{\lambda,\nu}(\bar\eta)\right)\right)''\\
+f(y_{\lambda,\nu}(\eta)-y_{\lambda,\nu}(\bar \eta))'= \eta-\bar \eta
\end{multline*}
by $\zeta\in L^\infty(\mathbb{R})$
\begin{multline*}\zeta\in\sgn(y_{\lambda,\nu}(\eta)-y_{\lambda,\nu}(\bar \eta)) =\\=\sgn\left(H^*\left(\frac{\sigma^2}{2}y_{\lambda,\nu}(\eta)\right)-H^*\left(\frac{\sigma^2}{2}y_{\lambda,\nu}(\bar\eta)\right)\right)\,,\end{multline*} 
where $\sgn r = \frac{r}{\mid r \mid}$ for $r \neq 0$, $\sgn 0 = [-1,1]$ and
we integrate on $\mathbb{R}$, taking into account that
\[
-\int_\mathbb{R} y''\sgn y dx\ge0\,,\quad\forall y\in H^1(\mathbb{R})\,,
\]
\[
\int_{\mathbb{R}} fy'\sgn y dx=\int_{\mathbb{R}} f\abs{y}'dx=-\int_\mathbb{R} f'\abs{y}dy\,.
\]
For a rigorous proof of these relations we replace $\sgn y$ by $X_\delta(y)$, where $X_\delta$ is a smooth approximation of signum function, while  $\delta\to 0$, see , e.g., \cite{B:1}, p. 115. If $\eta\in L^1(\mathbb{R})$ and $\{\eta_n\}_{n=1}^\infty\subset L^1(\mathbb{R})\cap L^2(\mathbb{R})$ is strongly convergent to $\eta\in L^1(\mathbb{R})$, we can proceed as above to obtain for the corresponding solution $y_n$ to \eqref{eq:9} the estimate \eqref{3:12}, namely,
\[
\norm{y_n-y_m}_1\le(\lambda-\lambda_0)^{-1}\norm{\eta_n-\eta_m}_1\,,\,\forall \lambda>\max\left(\lambda^*,\lambda_0\right).
\]
Hence there exists $y\in L^1(\mathbb{R})$ such that
\begin{equation}\label{3:13}
y_n\to y \quad\text{in $L^1(\mathbb{R})$ as $n\to\infty$}\,.
\end{equation}
By \eqref{eq:8}, we have
\begin{equation}\label{3:14}
\begin{aligned}
&(\lambda-\nu^2)\left(\nu I - \frac{d^2}{dx^2}\right)^{-1}y_n+H^*\left(\frac{\sigma^2}{2}y_n\right)+\nu y_n\\
&\hspace{1cm}-\left(\nu I - \frac{d^2}{dx^2}\right)^{-1}(fy'_n)= \left(\nu I - \frac{d^2}{dx^2}\right)^{-1}\eta_n\,.
\end{aligned}
\end{equation}

By \eqref{eq:1} and \eqref{eq:6} , we have
\begin{equation}\label{eq:36}
\| z_n \|_{W^{1,\infty}(\mathbb{R})} \leq \| \nu z_n-y_n\|_{1}\leq (\nu+1) \| y_n \|_{1}\:. 
\end{equation}

Let $\theta_n := \left( \nu I - \frac{d^2}{dx^2} \right)^{-1} \left(f y_n'\right)$, 

that is $\nu \theta_n - \theta_n^{''} = f y_n' = \left( f y_n \right)' - f' y_n$ 
in $\mathcal{D}' \left( \mathbb{R}^n\right)$. Equivalently
\begin{equation} \label{eq:37prime}
\begin{split}
& \nu\left(\theta_n(x) + \int_0^x f y_n d\xi\right) - \left( \theta_n(x) + \int_0^x f y_n d\xi\right)^{''}=\\ 
&=\nu \int_0^x f y_n d\xi -f' y_n \:.
\end{split}
 \end{equation}

 This yields 
\begin{equation} 
\begin{split}
&\norm{ \nu  \theta_n + \nu \int_0^x f y_n d\xi }_{1} \leq \nu \norm{ \int_0^x f y_n d\xi }_{1}
+ \| f' y_n \|_{L^1} \\
& \hspace{3cm}\leq \nu \| f \|_{\infty} \|y_n\|_{1} + \| f' \|_{\infty} \|y_n\|_{1}
\end{split}
 \end{equation}
and then
$$ \nu \| \theta_n \|_{1} \leq \left( (\nu +1) \|f\|_{\infty}  \| y_n \|_1+\| f' \|_{\infty}\right) \|y_n \|_{1}\,.$$

On the other hand, by \eqref{eq:37prime}, we have
\begin{equation}
 \begin{split}
    & \norm{ \theta_n +\int_0^x f y_n d\xi }_{W^{1,\infty}(\mathbb{R})} \leq  
		\norm{\nu \theta_n + \nu \int_0^x f y_n d\xi }_{1} \\
		&+ \| \nu \int_0^x f y_n d\xi -f' y_n \|_{1} \leq \nu \| \theta \|_{1}
		\\
		&+\left( 2 \nu \| f \|_{\infty}+ \| f'\|_{\infty} \right)\|y\|_{1}\:.
 \end{split}
\end{equation}
Hence
$$
 \| \theta_n \|_{W^{1,\infty}(\mathbb{R})} \leq \left( (3\nu+1) \|f\|_{\infty} 
+ 2\|f'\|_{\infty} \right) \|y_n\|_{1}\:.
$$
This yields
\begin{equation}\label{eq:3:15}
\norm{\left(\nu I - \frac{d^2}{dx^2}\right)^{-1}(fy'_n)}_\infty \le C \norm{y_n}_1\le \frac{C_1}{\lambda-\lambda_0}\norm{\eta_n}_1
\end{equation}
and therefore, by  \eqref{3:14}, we derive the estimate
\[
\norm{H^*\left(\frac{\sigma^2}{2}y_n\right)+\nu y_n}_\infty \le C \norm{y_n}_1\le \frac{C_1}{\lambda-\lambda_0}\norm{\eta_n}_1\,.
\]
Since, by hypothesis (1) $H^*(v)v \geq 0, \forall v \in \mathbb{R}$, the latter implies that
\begin{equation}\label{3:17}
\norm{H^*\left(\frac{\sigma^2}{2}y_n\right)}_\infty+\nu\norm{ y_n}_\infty \le \frac{C_1}{\lambda-\lambda_0}\norm{\eta_n}_1\,,\quad\forall n \,,
\end{equation}
where $C_1$ is still independent of $n$ as well as on $\nu$.

By \eqref{3:13} and \eqref{3:17}, it follows that 
\begin{equation}\label{3:18}
H^*\left(\frac{\sigma^2}{2}y_n\right) \stackrel{n\to\infty}{\longrightarrow} H^*\left(\frac{\sigma^2}{2}y\right)\,,
\end{equation}
strongly in $L^1(\mathbb{R})$, and therefore $y=y_{\lambda,\nu}\in L^\infty(\mathbb{R})\cap L^1(\mathbb{R})$ solves \eqref{eq:9}. Furthermore, by \eqref{3:12} and \eqref{3:17}, we have 
\begin{equation}\label{3:19}
\norm{y_{\lambda,\nu}}_1+\norm{H^*\left(\frac{\sigma^2}{2}y_{\lambda,\nu}\right)}_\infty+\nu\norm{ y_{\lambda,\eta}}_\infty \le \frac{C_1}{\lambda-\lambda_0}\norm{\eta}_1
\end{equation}
$\forall \lambda>\max\left(\lambda^*,\lambda_0\right)$, where $C_1$ is independent of $\nu$.
We also obtain that inequality \eqref{3:12} holds for solution $y_{\lambda,\nu}$ to \eqref{eq:9}, with $\eta\in L^1(\mathbb{R})$ only. 
Now we are going to extend the solution  $y_{\lambda,\nu}$ to \eqref{eq:9}  for all $\lambda>\lambda_0$. To this end we set $G^\nu_\lambda=\Gamma+\mathcal{H}$, rewriting  \eqref{eq:9} as follows $G^\nu_\lambda=\eta$.
For every $\lambda>0$, we can equivalently write this as
\begin{equation}\label{3:21}
y=(G^\nu_{\lambda+\delta})^{-1}(\eta)+\delta(G^\nu_{\lambda+\delta})^{-1}(\eta)\,.
\end{equation}
By \eqref{3:12} we also have
\[
\norm{(G^\nu_{\lambda+\delta})^{-1}}_{L(L^1(\mathbb{R}),L^1(\mathbb{R}))}\le\frac{1}{\lambda-\lambda_0}\,,
\]
then, by contraction principle,  \eqref{3:21} has a unique solution $y=y_{\lambda,\nu}\in L^1(\mathbb{R})$, for all $\lambda>\lambda_0$. Estimate \eqref{3:19} extends for all $\lambda>\lambda_0$.
In order to complete the proof of Lemma~\ref{lemma3:1}, we are going to let $\nu\to0$ in equation \eqref{eq:9}, or, more precisely, in \eqref{eq:8} which holds for all $\lambda>\lambda_0$.
As noted before, for all $z\in L^1(\mathbb{R})$, we have
\begin{equation*}
\norm{\left(\nu I - \frac{d^2}{dx^2}\right)^{-1} z}_{W^{1,\infty}(\mathbb{R})}\le C\norm{z}_1
\end{equation*}
and
\begin{equation}\label{3:22}
\lim_{\nu\to0} \left(\nu I - \frac{d^2}{dx^2}\right)^{-1} z =\left(- \frac{d^2}{dx^2}\right)^{-1} z \text{ in $W^{1,\infty}(\mathbb{R})$}\,,
\end{equation}
consequently
\begin{equation*}
\norm{\left(\nu I - \frac{d^2}{dx^2}\right)^{-1} (fz')}_\infty\le C\norm{z}_1
\end{equation*}
and
\begin{multline*}
\lim_{\nu\to0} \left(\nu I - \frac{d^2}{dx^2}\right)^{-1} (fz') =\lim_{\nu\to0} \frac{d}{dx}\left(\nu I - \frac{d^2}{dx^2}\right)^{-1} (fz)\\ + \lim_{\nu\to0} \left(\nu I - \frac{d^2}{dx^2}\right)^{-1} (f'z) \text{ strongly in $L^\infty(\mathbb{R})$}\,.
\end{multline*}
We set $u_\nu=\left(\nu I - \frac{d^2}{dx^2}\right)^{-1} y_{\lambda,\nu}$. Then, for $\nu\to 0$, we have $\nu u_\nu\to 0$ in $L^1(\mathbb{R})$
and
\[
-\lim u_\nu''=\lim y_{\lambda,\nu}=y\quad\text{in $\mathcal{D}'(\mathbb{R})$.}
\]
Hence
\[
 \left(\nu I - \frac{d^2}{dx^2}\right)^{-1} y_{\lambda,\nu} \to  \left(- \frac{d^2}{dx^2}\right)^{-1} y 
\]
strongly in $W^{1,\infty}(\mathbb{R})$, and
\[
 \left(\nu I - \frac{d^2}{dx^2}\right)^{-1} (f y_{\lambda,\nu})\to \left( - \frac{d^2}{dx^2}\right)^{-1} (fy') 
\]
strongly in $L^\infty(\mathbb{R})$,
where $y\in L^1(\mathbb{R})$, and
\[
\lambda\,y- H^*\left(\frac{\sigma^2}{2}y\right)'' -f\,y'=\eta\text{ in $\mathcal{D}'(\mathbb{R})$}\,,
\]
for $\lambda>\lambda_0$.
Moreover, by \eqref{3:12}, the map $\eta\to y$ is Lipschitz in $L^1(\mathbb{R})$, with Lipschitz constant $(\lambda-\lambda_0)^{-1}$, then $y$ solves  \eqref{3:3}, and \eqref{3:4} follows. This completes the proof of Lemma~\ref{lemma3:1}.
\end{proof}

\begin{proof}[Proof of Theorem~\ref{mild:sol} (continued)]
Coming back to equation \eqref{3:1}, by Lemma~\ref{lemma3:1} and \eqref{2:6}, it follows that the operator $A+B$ is quasi-m-accretive in $L^1(\mathbb{R})$. Then by the Crandall \& Ligget theorem, see \cite{B:1}, p. 147, the Cauchy problem \eqref{3:1} has a unique mild solution $y\in C([0,T]; L^1(\mathbb{R}))$, that is 
$$
y(t)=\lim_{\epsilon\to0} y_\epsilon (t)\text{ in $L^1(\mathbb{R})$, $\forall t\in[0,T]$}\,,
$$ \begin{gather*}
y_\epsilon(t)=y^i_\epsilon \text{ for $t\in[i\epsilon,(i+1)\epsilon]$, $i=0,\dots,N=\left[\frac{T}{\epsilon}\right]$}\\
\frac{1}{\epsilon}(y^{i+1}_\epsilon-y^i_\epsilon)+(A+B)(y^{i+1}_\epsilon)=g_1\quad i=0,\dots,N\\
y^0_\epsilon=y_0\,.
\end{gather*}
The function $y$ is a mild solution to \eqref{eq:2} in the sense of Definition~\ref{mild}.

Assume now that $j(\lambda v)\le C_\lambda j(v)$ $\forall v\in\mathbb{R}$ and $\lambda>0$. 
Taking into account that $j(v) \leq j(2v)-v H^*(v), \forall v \in \mathbb{R} \;,$
it is easily seen that this implies that 
\begin{equation}\label{3:23}
H^* (v)v\le \left( C_2-1\right) j(v)\,,\quad\forall v\in\mathbb{R}\,.
\end{equation} 
Assume also that $j(\frac{\sigma^2}{2}\,y_0)\in L^1(\mathbb{R})$. Then, if we take in \eqref{eq:3}, $z^i=\frac{\sigma^2}{2}\,y^i_\epsilon$ and get
\begin{multline*}
\frac{2}{\sigma^2\,\epsilon}\,(z^{i+1}-z^i)-\bigl(H^*(z^{i+1})\bigr)''-f\Bigl(\frac{2}{\sigma^2}\,z^{i+1}\Bigr)'\\+B\Bigl(\frac{2}{\sigma^2}\,z^{i+1}\Bigr)=g_1.
\end{multline*}
Multiplying by $H^*(z^{i+1})$ and integrating on $\mathbb{R}$ we get
\begin{align*}
&\frac{2}{\epsilon}\,\int_\mathbb{R}\frac{1}{\sigma^2}\,\bigl(j(z^{i+1})-j(z^i)\bigr)\,dx+\int_{\mathbb{R}}\Bigl(\bigl(H^*(z^{i+1})\bigr)'\Bigr)^2\,dx\\
&\quad +2\,\int_{\mathbb{R}}f\Bigl(\frac{z^{i+1}}{\sigma^2}\Bigr)\,H^*(z^{i+1})\,dx\\
&\quad+2\,\int_{\mathbb{R}}B\Bigl(\frac{z^{i+1}}{\sigma^2}\Bigr)\,H^*(z^{i+1})\,dx=\int_{\mathbb{R}}g_1\,H^*(z^{i+1})\,dx.
\end{align*}

Integrating by parts in $\int_{\mathbb{R}} f \left( \frac{z^{i+1}}{\sigma^2} \right)' H^*\left(z^{i+1}\right) dy$, summing up, after some calculation involving \eqref{2:6} and \eqref{3:23}, we get the estimate $\forall k$
\[
2\,\int_{\mathbb{R}}\frac{1}{\sigma^2}\,j(z^{k+1})\,dx+\epsilon\,\sum_{i=0}^k\int_{\mathbb{R}}\Bigl(\bigl(H^*(z^{i+1})\bigr)'\Bigr)^2\,dx\le C,
\] 
which implies the desired conclusion
\begin{gather*}
\Bigl(H^*\Bigl(\frac{\sigma^2}{2}\,y\Bigr)\Bigr)_x\in L^2((0,T)\times\mathbb{R}),\\ j\Bigl(\frac{\sigma^2}{2}\,y\Bigr)\in L^\infty([0,T;L^1(\mathbb{R})]).
\end{gather*}
\end{proof}

\section{A multi-dimensional case}\label{sec:nd}
Consider the problem \eqref{min:pb} in $\mathbb{R}^n$ with the drift $f\equiv0$, namely
\begin{equation}\label{4:1}
\text{\underline{Minimize}}\quad
 \mathbb{E}\biggl\{\int_0^Tg\left(X(t)\right)+h\left(u(t)\right)\,dt+g_0\left(X(T)\right)\biggr\}\,,
\end{equation}
subject to $u\in\mathcal{U}$, and to stochastic differential equation
\begin{equation}\label{4:2}
\begin{cases}dX=\sqrt{u}\,\sigma(X)\,dW,\quad\text{in }(0,T)\times\mathbb{R}^n\\
X(0)=X_0\end{cases}\,.
\end{equation}
Here $W\colon[0,T]\to\mathbb{R}^m$ is a Wiener process, $h:\mathbb{R}\to\mathbb{R}$ satisfies assumption (1) and
\begin{enumerate}[(i)]
\item $g,g_0\in W^{2,\infty}(\mathbb{R}^n;\mathbb{R})$
\item $\sigma(x)=\sigma_0(x)a$, where $\sigma_0\in C^1_b(\mathbb{R})$ satisfies condition \eqref{eq:4}, while the matrix $a=\norm{a_{ij}}^{n,m}_{i,j=1}$ is such that $b=aa^T$ is positive defined.
\end{enumerate}
Let $\mathcal{L}$ be the elliptic second order operator
\begin{equation}\label{4:3}
\mathcal{L}z(x)=\sum_{i,j=1}^n b_{ij}\frac{\partial^2 z(x)}{\partial x_i\partial x_j}\,,\quad\forall x\in\mathbb{R}^n
\end{equation}
where $b_{ij}=\sum_{k=1}^m a_{ik}a_{jk}$.
The corresponding dynamic programming equation for \eqref{4:1} reads as follows
\begin{equation}\label{4.4}
\begin{cases}\varphi_t(t,x)+\min_{u}\bigl\{\frac{1}{2}\sigma_0^2(x)\,\mathcal{L} \varphi(t,x)\,u+H(u)\bigr\}\\
\hspace{2.3cm}+g(x)=0,
\quad\forall t\in[0,T],x\in\mathbb{R}^n\\
\varphi(T,x)=g_0(x),\quad x\in\mathbb{R}^n\,.\end{cases}
\end{equation}
If
\begin{equation}\label{4:5}
y(t,x)=-\mathcal{L}\varphi(T-t,x)\,,\quad\forall t\in[0,T],x\in\mathbb{R}^n\,,
\end{equation}
equation \eqref{4:5} reduces to 
\begin{equation}\label{4:6}
\begin{cases}y_t(t,x)-\mathcal{L}\left(H^*\left(\frac{\sigma_0^2(x)}{2}y(t,x)\right)\right)=g_1(x),\\
\hspace{5cm}\forall t\in[0,T],x\in\mathbb{R}^n\\ 
y(0,x)=y_0(x),\quad x\in\mathbb{R}^n,\end{cases}
\end{equation}
see \eqref{DPE:conj:1}, where $y_0=-\mathcal{L}g_0$, $g_1=-\mathcal{L}g$. By \cite{benilan}, for $z\in L^1(\mathbb{R}^n)$ the elliptic equation $-\mathcal{L}\psi=z$ in $\mathcal{D}'(\mathbb{R}^n)$ has a unique solution $\psi$ which satisfies $\psi\in W^{1,\infty}(\mathbb{R})$ if $n=1$, $\psi\in W^{1,1}_\text{loc}(\mathbb{R}^2)$ if $n=2$ and $\psi\in L^1_\text{loc}(\mathbb{R})\cap M^{\frac{n}{n-2}}(\mathbb{R}^n)$ if $n=3$, where here $M^{\frac{n}{n-2}}(\mathbb{R}^n)$ is the Marcinkiewicz space. 
The latter implies that any solution $y\in C([0,T]; L^1(\mathbb{R}^n))$ to \eqref{4:6} leads to a unique solution $\varphi\in C([0,T]; W^{1,\infty}(\mathbb{R}))$ for $n=1$, $\varphi\in C([0,T]; W^{1,1}_\text{loc}(\mathbb{R}^2))$, for $n=2$, and, respectively, $\varphi\in C([0,T];  M^{\frac{n}{n-2}}(\mathbb{R}^n))$ for $n\ge 3$.
Concerning the existence of a solution to eq. \eqref{4:6}, we have a result similar to the one stated in Theorem~\ref{mild:sol},  namely
\begin{theorem}\label{thm4:1} Under assumption (i)-(ii)-(iii) there is a unique mild solution $ y\in C([0,T]; L^1(\mathbb{R}^n))$, in the sense of Definition~\ref{mild}.
\end{theorem}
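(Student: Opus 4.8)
The plan is to recast \eqref{4:6} exactly as the abstract Cauchy problem treated in the proof of Theorem~\ref{mild:sol}, namely
\[
\frac{dy}{dt}+A\,y=g_1,\qquad y(0)=y_0,
\]
now posed in $L^1(\mathbb{R}^n)$, where the nonlinear operator is
\[
A\,y=-\mathcal{L}\Bigl(H^*\bigl(\tfrac{\sigma_0^2}{2}\,y\bigr)\Bigr),\qquad y\in D(A),
\]
with $D(A)=\bigl\{y\in L^1(\mathbb{R}^n):H^*(\tfrac{\sigma_0^2}{2}\,y)\in L^1_{\text{loc}}(\mathbb{R}^n),\ A\,y\in L^1(\mathbb{R}^n)\bigr\}$ and all derivatives taken in $\mathcal{D}'(\mathbb{R}^n)$. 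Because the drift is absent ($f\equiv0$), there is neither a $B$-term nor a shift $\lambda_0$: the goal is to show that $A$ is genuinely $m$-accretive in $L^1(\mathbb{R}^n)$, after which the Crandall--Liggett theorem yields at once a unique mild solution $y\in C([0,T];L^1(\mathbb{R}^n))$ in the sense of Definition~\ref{mild}, and \eqref{4:5} together with the B\'enilan regularity recalled before the theorem produces the associated $\varphi$.

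The heart of the argument is the $n$-dimensional analogue of Lemma~\ref{lemma3:1}: for every $\eta\in L^1(\mathbb{R}^n)$ and $\lambda>0$ the resolvent equation $\lambda y-\mathcal{L}(H^*(\tfrac{\sigma_0^2}{2}\,y))=\eta$ has a unique solution with $\norm{y(\eta)-y(\bar\eta)}_1\le\lambda^{-1}\norm{\eta-\bar\eta}_1$. I would follow the same three-step route. First, regularize by $\nu>0$, replacing the equation with $\lambda y-\nu\mathcal{L}y-\mathcal{L}(\mathcal{H}(y))+\nu\mathcal{H}(y)=\eta$, where $\mathcal{H}(y):=H^*(\tfrac{\sigma_0^2}{2}\,y)$, and rewrite it through the resolvent $(\nu I-\mathcal{L})^{-1}$ exactly as in \eqref{eq:8}, the $f$-term being now absent. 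Since $b=aa^T$ is positive definite, $(\nu I-\mathcal{L})^{-1}$ is a bounded positive operator on $L^2(\mathbb{R}^n)$, the linear part $\Gamma y=(\lambda-\nu^2)(\nu I-\mathcal{L})^{-1}y+\nu y$ is coercive (trivially so for $\lambda\ge\nu^2$, the counterpart of \eqref{lambda-inequality}), and $\mathcal{H}$ is maximal monotone in $L^2(\mathbb{R}^n)$ by assumption (ii); hence $\Gamma+\mathcal{H}$ is maximal monotone, coercive and surjective, giving a unique $y_{\lambda,\nu}\in L^2(\mathbb{R}^n)$.

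Second, I would derive the $L^1$-contraction by subtracting two such equations, multiplying by $\zeta\in\sgn(y_{\lambda,\nu}(\eta)-y_{\lambda,\nu}(\bar\eta))$ and integrating, using the $L^1$-accretivity of $-\mathcal{L}$, i.e. $-\int_{\mathbb{R}^n}(\mathcal{L}w)\,\sgn(w)\,dx\ge0$. This Kato-type inequality holds because, $b$ being constant and positive definite, an orthogonal change of variables turns $\mathcal{L}$ into a weighted Laplacian, for which the estimate follows exactly as in the one-dimensional case by the smooth approximation $X_\delta$ of the signum; this yields \eqref{3:12} with $\lambda_0=0$, uniformly in $\nu$. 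The extension from $\lambda\ge\nu^2$ to every $\lambda>0$ is then the contraction/perturbation argument \eqref{3:21}, carried out at fixed $\nu$.

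The third and genuinely delicate step is the passage $\nu\to0$, which I expect to be the main obstacle. In one dimension this limit was powered by the fact that $(\nu I-\frac{d^2}{dx^2})^{-1}$ maps $L^1(\mathbb{R})$ into $W^{1,\infty}(\mathbb{R})$, producing the uniform $L^\infty$ bounds \eqref{3:17}--\eqref{3:19} and the strong $L^1$-convergence needed to identify the nonlinear limit. For $n\ge2$ this regularization gain is lost: the fundamental solution of $\nu I-\mathcal{L}$ is singular and $(\nu I-\mathcal{L})^{-1}$ no longer maps $L^1$ into $L^\infty$, so the $L^\infty$ route is unavailable. I would circumvent this by working entirely at the $L^1$ level, using the uniform $L^1$ bound on $y_{\lambda,\nu}$ together with the B\'enilan elliptic estimates recalled before the theorem (the $W^{1,1}_{\text{loc}}$, resp. Marcinkiewicz, control of $(\nu I-\mathcal{L})^{-1}$, uniform in $\nu$) to obtain $L^1_{\text{loc}}$-compactness of $(\nu I-\mathcal{L})^{-1}y_{\lambda,\nu}$; the monotonicity of $\mathcal{H}$ then identifies the limit of $H^*(\tfrac{\sigma_0^2}{2}\,y_{\lambda,\nu})$ with $H^*(\tfrac{\sigma_0^2}{2}\,y)$. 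Passing to the limit in the regularized equation shows that $y$ solves the resolvent equation, while the limiting $L^1$-contraction furnishes uniqueness and the $m$-accretivity of $A$; Crandall--Liggett then completes the proof.
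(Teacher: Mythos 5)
Your skeleton — recast \eqref{4:6} as the abstract Cauchy problem $\frac{dy}{dt}+Ay=g_1$ in $L^1(\mathbb{R}^n)$, prove $m$-accretivity of $A$, then apply Crandall--Liggett — is exactly the paper's, and your steps 1 and 2 (the $\nu$-regularized $L^2$ theory in the spirit of \eqref{eq:8}, and the $L^1$-contraction via the signum/Kato argument, both simplified by $f\equiv 0$) are sound. The problem is your step 3, which you correctly single out as the crux: what you sketch there does not close. The Bénilan/Marcinkiewicz estimates give $L^1_{\text{loc}}$-compactness of the \emph{potentials}, i.e. strong $L^1_{\text{loc}}$ convergence of $w_\nu=H^*(\tfrac{\sigma_0^2}{2}y_{\lambda,\nu})+\nu y_{\lambda,\nu}$ along a subsequence; but for $y_{\lambda,\nu}$ itself you only have a uniform $L^1(\mathbb{R}^n)$ bound, hence at best weak-$*$ convergence to a finite \emph{measure}. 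Two things then fail. First, nothing excludes concentration or escape of mass, so you never produce a candidate limit $y$ belonging to $L^1(\mathbb{R}^n)$ — the object $H^*(\tfrac{\sigma_0^2}{2}y)$ in your final identification is not even defined. Second, the Minty-type identification requires either strong $L^1_{\text{loc}}$ convergence of $y_{\lambda,\nu}$, or weak $L^1$ convergence together with $\limsup_\nu\int H^*(\tfrac{\sigma_0^2}{2}y_{\lambda,\nu})\,y_{\lambda,\nu}\,\phi\,dx\le\int\zeta\,y\,\phi\,dx$; neither is available, and since $H^*$ is flat where $(H^*)'=0$ you also cannot recover $y_{\lambda,\nu}$ by inverting $H^*$. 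In one dimension this step worked precisely because the $W^{1,\infty}$ bounds \eqref{3:17}--\eqref{3:19} make $H^*(\tfrac{\sigma^2}{2}y_{\lambda,\nu})$ converge locally \emph{uniformly}, and continuous functions pair against weak-$*$ convergent measures; an $L^1_{\text{loc}}$-convergent sequence does not. As written, step 3 is an assertion rather than an argument: you would in effect be reproving the Bénilan--Brezis--Crandall theorem with its hardest point left open.

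The idea you are missing — and the paper's actual proof — makes the whole regularization scheme unnecessary. Because $f\equiv 0$, the resolvent equation $\lambda y-\mathcal{L}\bigl(H^*(\tfrac{\sigma_0^2}{2}y)\bigr)=\eta$ can be rewritten by the substitution $z=H^*(\tfrac{\sigma_0^2}{2}y)$, equivalently $y\in\tfrac{2}{\sigma_0^2}(H^*)^{-1}(z)=:\beta(x,z)$, which is a maximal monotone graph in $z$ for each $x$ thanks to the non-degeneracy $\sigma_0^2\ge\rho^2>0$ of assumption (ii); the equation becomes $\lambda\,\beta(\cdot,z)-\mathcal{L}z\ni\eta$, and since $b=aa^T$ is a constant positive definite matrix, a linear change of coordinates turns $\mathcal{L}$ into $\Delta$. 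Semilinear equations $\lambda\beta(z)-\Delta z\ni\eta$ with $\eta\in L^1(\mathbb{R}^n)$ are exactly what the classical $L^1$ theory cited in the paper (\cite{benilan}, \cite{B:2}) covers: it yields existence plus the $L^1$-contraction, i.e. $m$-accretivity of $A$, with no $\nu$-limit at all. Once that lemma is in hand, your concluding Crandall--Liggett step coincides with the paper's and correctly finishes the proof. So: right architecture, but the core lemma must be obtained by this reduction to the known semilinear $L^1$ theory (or by a genuinely complete reproof of it), not by the limit argument you sketched.
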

\begin{proof} We shall proceed as in the proof of Theorem~\ref{mild:sol}. In particular, we consider the operator $A\colon D(A)\subset L^1(\mathbb{R}^n)\to L^1(\mathbb{R}^n)$
\begin{equation}\label{4:7}
Ay=-\mathcal{L}\left(H^*\left(\frac{\sigma_0^2}{2}y\right)\right)\quad\forall y\in D(A)\,,
\end{equation}
$$D(A)=\{y\in L^1(\mathbb{R}^n),\mathcal{L}\left(H^*\left(\frac{\sigma_0^2}{2}y\right)\right)\in L^1(\mathbb{R}^n)\}\,,$$
and we write equation \eqref{4:6} as
\begin{equation}\label{4:8}
\begin{cases}\frac{dy}{dt}+A\,y=g_1,\quad\text{in }[0,T]\\
y(0)=y_0
\end{cases}\end{equation}

\begin{lemma} The operator $A$ is m-accretive in $L^1(\mathbb{R}^n)$. \end{lemma}
\begin{proof} Since the operator $-\Delta$ is m-accretive in $L^1(\mathbb{R}^n)$, see, e.g., \cite{B:2,benilan}, then the same holds for  the operator $-\mathcal{L}$, moreover,  taking into account that $\abs{\sigma_0(x)}\ge\rho>0$, it follows the m-accretivety of the operator $A$, as claimed. Indeed, equation 
\[
\lambda y+ Ay=\eta\text{ in $\mathcal{D}'(\mathbb{R}^n)$}
\]
is equivalent to
\[
\lambda \beta(z)-  \Delta z=\eta \text{ in $\mathcal{D}'(\mathbb{R}^n)$}
\]
where $\beta=\frac{1}{\sigma^2_0}z$ and this implies the conclusion.
\end{proof}

Again invoking the  Crandall \& Ligget Theorem, we get that the eq.  \eqref{4:8} has a unique mild solution $ y\in C([0,T]; L^1(\mathbb{R}^n))$, which is given by
\[
y(t)=\lim_{\epsilon\to0} y_\epsilon (t)\text{ in $L^1(\mathbb{R}^n)$, $\forall t\in[0,T]$}\,,
\]
\begin{gather*}
y_\epsilon(t)=y^i_\epsilon \text{ for $t\in[i\epsilon,(i+1)\epsilon]$, $i=0,\dots,N=\left[\frac{T}{\epsilon}\right]$}\\
\frac{1}{\epsilon}(y^{i+1}_\epsilon-y^i_\epsilon)+A(y^{i+1}_\epsilon)=g_1\,,\quad i=0,\dots,N\\
y^0_\epsilon=y_0 \,,
\end{gather*}
hence completing  the proof of Theorem~\ref{thm4:1}. 
\end{proof}

By Theorem~\ref{thm4:1} it follows the existence and uniqueness of a solution $\varphi\in C([0,T]; L^1_\text{loc}(\mathbb{R}^n)\cap M^{\frac{n}{n-2}}(\mathbb{R}^n))$.

\begin{remark}
In the general $n$-dimensional case, where $f\in C^2_b(\mathbb{R}^n)$, the dynamic programming equation corresponding to \eqref{min:pb} reduces to
\begin{equation}\label{4:11}
\begin{cases}y_t-\mathcal{L}\left(H^*\left(\frac{\sigma_0^2}{2}y\right)\right)- f\cdot\nabla y +B y= \mathcal{L}g_1,\\
\hspace{5cm}\forall t\in[0,T],x\in\mathbb{R}^n\\
y(0)=y_0\,,\quad x\in\mathbb{R}^n,\end{cases}\,,
\end{equation}
where
\begin{multline*}
By=-2\sum_{i,j,k=1}^n b_{ij}D_jf_k\frac{\partial^2}{\partial x_i\partial x_k}\mathcal{L}^{-1}(y) \\- \sum_{i,j,k=1}^n b_{ij}D_{ij}f_k \frac{\partial}{\partial x_k} \mathcal{L}^{-1}(y)
\end{multline*}
therefore eq. \eqref{4:11} can be treated analogously to what we have seen in the 1-dimensional case, at least if the operator $B$ is continuous  in $L^1(\mathbb{R}^n)$, which happens under some  additional conditions on  $f=\{f_k\}_{k=1}^n$. We note that, for $\mathcal{L}=\Delta$, the linear Fokker-Planck equation \eqref{4:11}, has been treated in \cite{B:2}.
\end{remark}

\section{The degenerate 1-D case}

Consider here equation \eqref{eq:2}, that is 
\begin{equation}\label{eq:2:2}
\begin{cases}
y_t-\Bigl(H^*\bigl(\frac{\sigma^2}{2}\,y\bigr)\Bigr)_{xx}-f\,y_x+B\,y=g_1,\quad\text{in }[0,T]\times\mathbb{R}\\
y(0)=y_0\in\mathbb{R}
\end{cases}
\end{equation}
where $\sigma$ is assumed to satisfy the condition $\sigma\in C_b^2(\mathbb{R})$ only.
Moreover, if we consider, as above, the operator $A:D(A)\subset L^1(\mathbb{R})\rightarrow L^1(\mathbb{R})$, such that
\begin{equation}
A\,y=-\Bigl(H^*\Bigl(\frac{\sigma^2}{2}\,y\Bigr)\Bigr)''-f\,y'\,,
\end{equation}
\[
D(A)=\Bigl\{y\in L^1(\mathbb{R})\,;\;f\,y'+\Bigl(H^*\Bigl(\frac{\sigma^2}{2}\,y\Bigr)\Bigr)''\in L^1(\mathbb{R})\Bigr\},
\]
we have the following holds
\begin{lemma}\label{lemma5:1}
$A$ is quasi-$m$-accretive in $L^1(\mathbb{R})$.
\end{lemma}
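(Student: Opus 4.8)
The plan is to recover the non-degenerate setting of Lemma~\ref{lemma3:1} by a vanishing regularization of $\sigma$ and then to pass to the limit; the decisive point is that the accretivity threshold produced in Lemma~\ref{lemma3:1} is $\lambda_0=\norm{f'}_\infty$, which does not depend on $\sigma$. For $\epsilon>0$ I would set $\sigma_\epsilon=(\sigma^2+\epsilon)^{1/2}$, so that $\sigma_\epsilon\in C^2_b(\mathbb{R})$ and $\abs{\sigma_\epsilon}\ge\sqrt\epsilon>0$; thus hypothesis (3) holds for $\sigma_\epsilon$ and, by Lemma~\ref{lemma3:1}, the operator $A_\epsilon y=-\bigl(H^*(\tfrac{\sigma_\epsilon^2}{2}y)\bigr)''-fy'$ is quasi-$m$-accretive with the \emph{same} constant $\lambda_0$. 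Consequently, for $\lambda>\lambda_0$ and $\eta\in L^1(\mathbb{R})$ the resolvent $y_\epsilon:=(\lambda I+A_\epsilon)^{-1}\eta$ is well defined and satisfies the $\epsilon$-uniform estimate \eqref{3:4}. The whole lemma then reduces to proving that $y_\epsilon\to y$ in $L^1(\mathbb{R})$, where $y$ solves $\lambda y+Ay=\eta$: passing \eqref{3:4} to the limit gives the accretivity of $A$ with threshold $\lambda_0$, while solvability for arbitrary $\eta\in L^1(\mathbb{R})$ gives the range condition $\mathbb{R}(\lambda I+A)=L^1(\mathbb{R})$, and the two together are exactly quasi-$m$-accretivity.

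I would next record the $\epsilon$-uniform bounds inherited from Lemma~\ref{lemma3:1}. Taking $\bar\eta=0$ (so that $\bar y_\epsilon=0$, since $A_\epsilon0=0$) gives $\norm{y_\epsilon}_1\le(\lambda-\lambda_0)^{-1}\norm{\eta}_1$, while estimate \eqref{3:19}, whose constant is independent of $\sigma$ and of the inner viscosity parameter, yields $\norm{w_\epsilon}_\infty\le C\norm{\eta}_1$ for $w_\epsilon:=H^*(\tfrac{\sigma_\epsilon^2}{2}y_\epsilon)$. Letting $\nu\to0$ in \eqref{eq:8} gives the representation $w_\epsilon=\Phi(\eta)-\lambda\,\Phi(y_\epsilon)+\Phi(fy_\epsilon')$, where $\Phi=(-d^2/dx^2)^{-1}\colon L^1(\mathbb{R})\to W^{1,\infty}(\mathbb{R})$ is continuous and $\Phi(fy_\epsilon')=\tfrac{d}{dx}\Phi(fy_\epsilon)-\Phi(f'y_\epsilon)$. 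Here $\Phi(\eta)$, $\Phi(y_\epsilon)$ and $\Phi(f'y_\epsilon)$ are bounded in $W^{1,\infty}(\mathbb{R})$, hence relatively compact in $C_{\mathrm{loc}}(\mathbb{R})$, while $\tfrac{d}{dx}\Phi(fy_\epsilon)$ has total variation $\norm{fy_\epsilon}_1\le C$, so by Helly's theorem it is relatively compact in $L^p_{\mathrm{loc}}(\mathbb{R})$; therefore, along a subsequence, $w_\epsilon\to w$ in $L^p_{\mathrm{loc}}(\mathbb{R})$. Since $\sigma_\epsilon^2\to\sigma^2$ uniformly and $H^*$ is locally Lipschitz by \eqref{eq:5prime}, once $y_\epsilon\to y$ in $L^1(\mathbb{R})$ we would obtain $w_\epsilon\to H^*(\tfrac{\sigma^2}{2}y)$, and feeding this back into the representation identifies the limit equation $\lambda y-\bigl(H^*(\tfrac{\sigma^2}{2}y)\bigr)''-fy'=\eta$, i.e. $y=(\lambda I+A)^{-1}\eta$.

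The crux, and the main obstacle, is precisely the strong $L^1$ convergence (equivalently, the relative $L^1$-compactness) of $\{y_\epsilon\}$: it is here that the loss of uniform ellipticity bites. If one tries to estimate $\norm{y_\epsilon-y_{\epsilon'}}_1$ directly, subtracting the two resolvent equations gives $\lambda(y_\epsilon-y_{\epsilon'})-(w_\epsilon-w_{\epsilon'})''-f(y_\epsilon-y_{\epsilon'})'=0$, and the $\sgn$-multiplier computation of Lemma~\ref{lemma3:1} needs the sign of $y_\epsilon-y_{\epsilon'}$ to agree with that of $w_\epsilon-w_{\epsilon'}$. On $\{\sigma=0\}$, however, $w_\epsilon=H^*(\tfrac{\epsilon}{2}y_\epsilon)$ and $w_{\epsilon'}=H^*(\tfrac{\epsilon'}{2}y_{\epsilon'})$ carry no definite relation to $\sgn(y_\epsilon-y_{\epsilon'})$, so neither $\sgn(y_\epsilon-y_{\epsilon'})$ nor $\sgn(w_\epsilon-w_{\epsilon'})$ makes the diffusion term non-negative \emph{and} allows the transport term to be integrated by parts. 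The Cauchy estimate thus breaks down exactly over the degenerate set, where the second-order part of $A$ vanishes and only the first-order transport $-fy'$ survives; this is the genuine difficulty of a degenerate convection-diffusion operator in $L^1$.

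To close this gap I would exploit the additional regularity $\sigma\in C^2_b(\mathbb{R})$ assumed in this section, which is the only hypothesis stronger than in Lemma~\ref{lemma3:1}. Using it in the energy identity obtained by multiplying the resolvent equation by $w_\epsilon$ and integrating (as in the proof of Theorem~\ref{mild:sol}), together with $H^*(v)v\ge0$ and the uniform bounds $\norm{w_\epsilon}_\infty,\norm{y_\epsilon}_1\le C$, one controls $(w_\epsilon)_x$ and hence sharpens the convergence $w_\epsilon\to w$. The remaining and most delicate step is the identification $w=H^*(\tfrac{\sigma^2}{2}y)$ from the weak convergence $y_\epsilon\rightharpoonup y$ and the strong convergence $w_\epsilon\to w$: since $H^*(\tfrac{\sigma^2}{2}\cdot)$ is only monotone (indeed flat on the degenerate set and on the flat part of $H^*$), this cannot be read off pointwise and must be settled by a Minty-type monotonicity argument, which in turn requires the convergence of the energy pairing $\int_\mathbb{R} w_\epsilon\,\tfrac{\sigma_\epsilon^2}{2}y_\epsilon\,dx$. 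Establishing this pairing convergence is where $\sigma\in C^2_b$ and the compactness of $\{w_\epsilon\}$ are combined. Once $w=H^*(\tfrac{\sigma^2}{2}y)$ is identified and the convergence of $y_\epsilon$ is upgraded to $L^1(\mathbb{R})$, the contraction \eqref{3:4} and the range condition pass to the limit, which proves Lemma~\ref{lemma5:1}.
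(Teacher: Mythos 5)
Your setup coincides with the paper's: the same regularization $\sigma^2+\epsilon$ (your $\sigma_\epsilon=(\sigma^2+\epsilon)^{1/2}$), the observation that Lemma~\ref{lemma3:1} applies with the $\sigma$-independent threshold $\lambda_0=\norm{f'}_\infty$, the $\epsilon$-uniform $L^1$ bound on the resolvents, and the correct diagnosis that a Cauchy-in-$\epsilon$ estimate via the $\sgn$ multiplier fails on $\{\sigma=0\}$ (two different regularizations destroy the sign compatibility). However, your proof has a genuine gap, and it sits exactly where the paper does its real work: you never obtain an $\epsilon$-uniform $L^\infty$ bound on $y_\epsilon$ itself. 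Your only bounds are $\norm{y_\epsilon}_1\le C$ and $\norm{w_\epsilon}_\infty\le C$; in the degenerate case a sup bound on $w_\epsilon=H^*(\tfrac{\sigma_\epsilon^2}{2}y_\epsilon)$ says nothing about $\sup|y_\epsilon|$ (where $\sigma$ is small, $H^*(\tfrac{\sigma_\epsilon^2}{2}y_\epsilon)$ can be bounded while $y_\epsilon$ blows up). With only an $L^1$ bound, $\{y_\epsilon\}$ is merely weak-* compact in the space of measures, the limit may have a singular part, and the pairing $\int w_\epsilon\,\tfrac{\sigma_\epsilon^2}{2}y_\epsilon\,dx$ needed for your Minty argument cannot be controlled --- which is why that step of your proposal remains, by your own admission, unestablished. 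The paper fills precisely this hole with a maximum-principle argument: for $\eta\in L^1(\mathbb{R})\cap L^\infty(\mathbb{R})$ it compares $y_\epsilon$ with the constants $\pm M$, using \eqref{eq:5prime} and $\sigma\in C^2_b(\mathbb{R})$ to check that $\tilde\eta=\eta-\lambda M+\bigl(H^*\bigl(\tfrac{\sigma^2+\epsilon}{2}M\bigr)\bigr)''\le 0$ for $M,\lambda$ large independently of $\epsilon$, whence $|y_\epsilon|\le M$. This is the one place where the extra hypothesis $\sigma\in C^2_b$ is actually consumed; in your proposal it is invoked only vaguely (``is where $\sigma\in C^2_b$ and the compactness of $\{w_\epsilon\}$ are combined'').

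Once that bound is in hand, your perceived crux --- strong $L^1(\mathbb{R})$ convergence of $y_\epsilon$ --- turns out to be a red herring: the paper never proves it and does not need it. From $|y_\epsilon|\le M$ one gets $y_\epsilon\rightharpoonup y$ weakly in every $L^p$, $1<p\le\infty$, while \eqref{5:4} bounds $\bigl(H^*\bigl(\tfrac{\sigma^2+\epsilon}{2}y_\epsilon\bigr)\bigr)'$ in $L^1\cap L^\infty$, so $w_\epsilon\to\zeta$ strongly in $C(\mathbb{R})$; weak convergence of $y_\epsilon$ plus strong convergence of the nonlinearity identifies $\zeta=H^*\bigl(\tfrac{\sigma^2}{2}y\bigr)$ (the standard monotonicity argument, which here is easy because one factor converges strongly), and the linear terms $\lambda y_\epsilon$, $fy_\epsilon'$ pass to the limit distributionally. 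General $\eta\in L^1(\mathbb{R})$ is then handled by density: approximate by $\eta_n\in L^1\cap L^\infty$ and apply the contraction estimate $\lambda\norm{y_n-y_m}_1\le\norm{\eta_n-\eta_m}_1+\norm{f'}_\infty\norm{y_n-y_m}_1$ \emph{between two solutions of the same degenerate equation} --- this estimate survives degeneracy, because when the nonlinearity is the same for both solutions, $H^*\bigl(\tfrac{\sigma^2}{2}y_n\bigr)-H^*\bigl(\tfrac{\sigma^2}{2}y_m\bigr)$ vanishes on $\{\sigma=0\}$ and has the sign of $y_n-y_m$ elsewhere, so the $\sgn$ multiplier argument goes through. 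In short: your skeleton is the paper's, but the decisive $L^\infty$ estimate is missing, and the limiting scheme you propose in its place (measure-valued weak limits plus an unproven pairing convergence) does not close.
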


\begin{proof}
For each $\epsilon>0$ we consider the operator
\begin{equation}
A_\epsilon\,y=-\Bigl(H^*\Bigl(\frac{\sigma^2+\epsilon}{2}\,y\Bigr)\Bigr)''-f\,y'\,,
\end{equation}
which is quasi-$m$-accretive, seen Lemma \ref{lemma3:1}.
Hence, for each $\eta\in L^1(\mathbb{R})$ and $\lambda\ge\lambda_0$ the equation
\begin{equation}\label{5:4}
\lambda\,y_\epsilon-\Bigl(H^*\Bigl(\frac{\sigma^2+\epsilon}{2}\,y_\epsilon\Bigr)\Bigr)''-f\,y_\epsilon'=\eta,\quad\text{in }\mathbb{R},
\end{equation}
has a unique solution $y_\epsilon\in L^1(\mathbb{R})$, with $H^*\Bigl(\frac{\sigma^2+\epsilon}{2}\,y_\epsilon\Bigr)\in L^\infty(\mathbb{R})$.

\underline{Dynamic estimates.} As in the proof of Lemma \ref{lemma3:1}, we have
\begin{equation}
\lambda\,\norm{y_\epsilon}_1\le\norm{\eta}_1+\norm{f'}_1\,\norm{y_\epsilon}_1,\quad\forall\epsilon>0\,,
\end{equation} that is for $\lambda>\norm{f'}_\infty$
\begin{equation}
\norm{y_\epsilon}_1\le(\lambda-\norm{f'}_1)^{-1}\,\norm{\eta}_1,\quad\forall\epsilon>0\,.
\end{equation}
Assume now that $\eta\in L^1(\mathbb{R})\cap L^{\infty}(\mathbb{R})$, then, by \eqref{5:4} we see that for each $M>0$
\begin{align*}
&\lambda\,(y_\epsilon-M)-\Bigl(H^*\Bigl(\frac{\sigma^2+\epsilon}{2}\,y_\epsilon\Bigr)-H^*\Bigl(\frac{\sigma^2+\epsilon}{2}\,M\Bigr)\Bigr)''\\
&\quad-f(y_\epsilon-M)'=\eta-\lambda\,M+\Bigl(H^*\Bigl(\frac{\sigma^2+\epsilon}{2}\,M\Bigr)\Bigr)''=\tilde{\eta}.
\end{align*}
Moreover, by \eqref{eq:5prime}, we also have 
\begin{multline*}
\tilde{\eta}(x)\leq \eta-M\lambda+M^2\| (H^*)''\|_{\infty} \| \sigma \sigma' \|_{\infty}+\\
+M \|(H^*)'\|_{\infty} \| \sigma \sigma'' +(\sigma')^2\|_{\infty} \leq 0
\end{multline*}
for $M$ and $\lambda$ large enough (independently of $\epsilon$).
This yields
\[
\lambda\,\norm{(y_\epsilon-M)^+}_{1}\le\norm{f'}_\infty
\,\norm{(y_\epsilon-M)^+}_1.
\]
Hence $y_\epsilon\le M$ in $\mathbb{R}$ for $\lambda>\norm{f'}_\infty$.
Similarly, it follows that
\begin{align*}
&\lambda\,(y_\epsilon+M)-\Bigl(H^*\Bigl(\frac{\sigma^2+\epsilon}{2}\,y_\epsilon\Bigr)-H^*\Bigl(-\frac{\sigma^2+\epsilon}{2}\,M\Bigr)\Bigr)''+\\
&\hspace{2cm}+f(y_\epsilon+M)'=\\
&\hspace{3cm}=\eta+\lambda\,M+\Bigl(H^*\Bigl(-\frac{\sigma^2+\epsilon}{2}\,M\Bigr)\Bigr)''\\
&\hspace{3cm}=\eta+\lambda\,M\ge0,
\end{align*}
if $M$ is large enough, but independent of $\epsilon$. Therefore, if multiply the equation by $(y_\epsilon+M)^-$ and integrate on $\mathbb{R}$, we get $\norm{(y_\epsilon+M)^-}_1\ge0$ which implies $y_\epsilon\ge-M$ in $\mathbb{R}$.

By \eqref{5:4}, we see that
$\Bigl\{\Bigl(H^*\Bigl(\frac{\sigma^2+\epsilon}{2}\,y_\epsilon\Bigr)\Bigr)'+f\,y_\epsilon'\Bigr\}_{\epsilon>0}$ is bounded in $W^{1,\infty}(\mathbb{R})$.

Hence $\Bigl(H^*\Bigl(\frac{\sigma^2+\epsilon}{2}\,y_\epsilon\Bigr)\Bigr)'$ bounded in $L^1(\mathbb{R})\cap L^\infty(\mathbb{R})$, so that
$\Bigl\{\eta_\epsilon=H^*\Bigl(\frac{\sigma^2+\epsilon}{2}\,y_\epsilon\Bigr)\Bigr\}$ is compact in $C(\mathbb{R})$.
It follows that on a subsequence $\epsilon\rightarrow 0$, we have
\begin{align*}
&y_\epsilon\rightarrow y,\quad\text{weakly in all }L^p,1<p\le\infty\,,\\
&\eta_\epsilon\rightarrow\zeta,\quad\text{strongly in }C(\mathbb{R})\,,
\end{align*}
where $\zeta=H^*\Bigl(\frac{\sigma^2}{2}\,y\Bigr)$ in $\mathbb{R}$.
Letting $\epsilon\rightarrow 0$ in \eqref{5:4}, we get
\[
\lambda\,y-\Bigl(H^*\Bigl(\frac{\sigma^2}{2}\,y\Bigr)\Bigr)''+f\,y'=\eta,\quad\text{in }\mathcal{D}'(\mathbb{R}).
\]
Next for $\eta\in L^1(\mathbb{R})$ we choose $\{\eta_n\}\subset L^1(\mathbb{R})\cap L^\infty(\mathbb{R})$, $\eta_n\rightarrow\eta$ in $L^1(\mathbb{R})$ and we have
\[
\lambda\,y_n-\Bigl(H^*\Bigl(\frac{\sigma^2}{2}\,y_n\Bigr)\Bigr)''+f\,y_n'=\eta_n\,,\quad\forall n\,,
\]
 getting \[
\lambda\norm{y_n-y_m}_1\le\norm{\eta_n-\eta_m}_1+\norm{f'}_\infty\norm{y_n-y_m}_1,\,\forall n,m.
\]
Hence, for $\lambda>\norm{f'}_\infty$ we have for $n\rightarrow\infty$
\begin{align*}
&y_n\rightarrow y,\quad \text{strongly in }L^1(\mathbb{R})\\
&\Bigl(H^*\Bigl(\frac{\sigma^2}{2}\,y_n\Bigr)\Bigr)\rightarrow\Bigl(H^*\Bigl(\frac{\sigma^2}{2}\,y\Bigr)\Bigr),\quad\text{a.e. in }\mathbb{R}\\
&f\,y_n'\rightarrow f\,y',\quad\text{in }\mathcal{D}'(\mathbb{R})\,.
\end{align*}
This yields 
\[
\lambda\,y-\Bigl(H\Bigl(\frac{\sigma^2}{2}\,y\Bigr)\Bigr)''-f\,y'=\eta,\quad\text{in }\mathcal{D}'(\mathbb{R}).
\]
Hence for $\lambda\ge\lambda_0$, $y\in L^1(\mathbb{R})$ is the solution to equation $\lambda\,y+A\,y=\eta$ as claimed.
As seen earlier this implies that the operator $A+B$ is quasi-$m$-accretive in $L^1(\mathbb{R})$
\end{proof}

Then by the existence theorem for the equation
\begin{equation*}
\begin{cases}
\frac{\partial y}{\partial t}+A\,y+B\,y=0\\
y(0)=y_0,
\end{cases}
\end{equation*}
we get

\begin{theorem}\label{Thm5:1}
There is a unique mild solution $y\in C([0,T];\mathbb{R})$ to equation \eqref{eq:2:2}.
\end{theorem}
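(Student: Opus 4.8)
The plan is to recast equation \eqref{eq:2:2} as the abstract Cauchy problem
\begin{equation*}
\frac{dy}{dt} + Ay + By = g_1 \quad \text{in } [0,T], \qquad y(0) = y_0,
\end{equation*}
in $L^1(\mathbb{R})$ and to invoke the Crandall--Liggett generation theorem for quasi-$m$-accretive operators, exactly as in the proof of Theorem~\ref{mild:sol}. The whole argument reduces to showing that the full operator $A+B$ is quasi-$m$-accretive in $L^1(\mathbb{R})$; once this is established, the existence and uniqueness of the mild solution $y\in C([0,T];L^1(\mathbb{R}))$, together with its representation as the $L^1$-limit of the implicit Euler scheme of Definition~\ref{mild}, follow from the nonlinear semigroup theory in \cite{B:1}.

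First I would record that Lemma~\ref{lemma5:1} already supplies the quasi-$m$-accretivity of $A$ under the weakened hypothesis $\sigma\in C_b^2(\mathbb{R})$; this is the substantive analytic input, obtained there by approximating $\sigma^2$ with the non-degenerate $\sigma^2+\epsilon$ so as to fall within the scope of Lemma~\ref{lemma3:1}, deriving $\epsilon$-uniform $L^1\cap L^\infty$ bounds, and passing to the limit $\epsilon\to0$. It then remains only to incorporate the lower-order operator $B$ of \eqref{2:6}, for which we dispose of the global Lipschitz estimate $\norm{By}_1 \le C\norm{y}_1$ on $L^1(\mathbb{R})$.

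The key step is a perturbation argument: a quasi-$m$-accretive operator stays quasi-$m$-accretive under a globally Lipschitz perturbation. To verify the range condition $\mathbb{R}(\lambda I + A + B) = L^1(\mathbb{R})$ for $\lambda$ large, I would rewrite the resolvent equation $\lambda y + Ay + By = \eta$ as the fixed-point problem
\begin{equation*}
y = (\lambda I + A)^{-1}(\eta - By),
\end{equation*}
where, by Lemma~\ref{lemma5:1}, the resolvent $(\lambda I + A)^{-1}$ is well defined and Lipschitz with constant $(\lambda-\lambda_0)^{-1}$, $\lambda_0=\norm{f'}_\infty$. Since $B$ is Lipschitz with some constant $L$, the map on the right is a strict contraction on $L^1(\mathbb{R})$ whenever $(\lambda-\lambda_0)^{-1}L<1$, i.e. for $\lambda>\lambda_0+L$; the Banach fixed-point theorem then produces a unique solution $y$, and the accretivity inequality for $A+B+(\lambda_0+L)I$ results from combining the accretivity of $A$ with the Lipschitz bound on $B$ tested against the duality element $\sgn(y_1-y_2)$. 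Hence $A+B$ is quasi-$m$-accretive.

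I expect no genuine obstacle remains at this level, precisely because the degeneracy has already been absorbed into Lemma~\ref{lemma5:1}. The only point that deserves care is that the quasi-accretivity constant of $A+B$ is the fixed number $\lambda_0+L$, independent of the time-discretization step, so that the implicit scheme of Definition~\ref{mild} converges in $C([0,T];L^1(\mathbb{R}))$ and its limit is identified as the unique mild solution to \eqref{eq:2:2}.
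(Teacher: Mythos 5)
Your proposal is correct and follows essentially the same route as the paper: the paper likewise reduces Theorem~\ref{Thm5:1} to the quasi-$m$-accretivity of $A$ established in Lemma~\ref{lemma5:1} (via the $\sigma^2+\epsilon$ regularization), absorbs the Lipschitz perturbation $B$ by the standard argument ``as seen earlier'' in Section~3, and concludes with the Crandall--Liggett theorem. Your only addition is to spell out explicitly the fixed-point/perturbation step that the paper leaves implicit, which is a faithful elaboration rather than a different proof.
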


As in previous case Theorem \ref{Thm5:1} implies via \eqref{2:4}  the existence of a mild solution $\varphi$ to equation \eqref{min:pb} satisfying \eqref{2:11}. We omit the details.

\section{Conclusions}
In this paper it is shown, via nonlinear semigroup theory in $L^1$, both the existence and the uniqueness of a mild solution for the dynamic programming equation for stochastic optimal control problem with control in the volatility term. Latter problem is related to the analysis of controlled stochastic volatility models, within the financial frameworks, whose related computational study  is the subject of our ongoing research.

\bibliographystyle{plain}        
\bibliography{autosam}           
\end{document}